\documentclass{amsart}
\usepackage{marginnote}
\usepackage{amstext,amscd,xcolor}
\usepackage{bbm}
\usepackage{dsfont}
\usepackage{amsthm, enumerate}
\usepackage{amsfonts}
\usepackage{amssymb}
\usepackage{dsfont}
\usepackage{amsmath}
\usepackage{tikz}
\usepackage{setspace}

\usepackage{todonotes}
\usepackage{graphicx}
\usepackage{mathrsfs}
\newtheorem{mtheorem}{Theorem}
\newtheorem{theorem}{\textbf{Theorem}}[section]

\newtheorem{corollary}{\textbf{Corollary}}

\newtheorem{lemma}[theorem]{\textbf{Lemma}}
\newtheorem{proposition}[theorem]{\textbf{Proposition}}
\newtheorem{remark}[theorem]{\textbf{Remark}}

\newtheorem*{question*}{\textbf{Question}}

\theoremstyle{plain}

\newtheoremstyle{example}% name
  {}% space above
  {}% space below
  {}% body font
  {}% indent amount
  {\itshape}% theorem head font
  {.}% punctuation after theorem head
  {.5em}% space after theorem head
  {\thmname{#1}\thmnumber{ #2}\thmnote{ (#3)}}% theorem head spec
\theoremstyle{example}
\newtheorem{example}{Example}[section]

\numberwithin{equation}{section}

\begin{document}

\title[Stationary random iterations of Lipschitz maps]{Stationary random iterations of Lipschitz maps: a generalized displacement condition for the existence of invariant structures}

\author[B. Figliaggi ]{Bella Figliaggi }
\address{Instituto de Ciências Matemáticas e Computação, Universidade de São Paulo}
\email{bellafigliaggi@usp.br}

\author[E. Matias]{Edgar Matias}
\address{Departamento de Matem\' atica, Universidade Federal da Bahia, Av. Adhemar de Barros s/n, 40170-110 Salvador, Brazil}
\email{edgar.matias@ufba.br}

\author[T. Pereira]{Tiago Pereira}
\address{Instituto de Ciências Matemáticas e Computação, Universidade de São Paulo}
\address{Department of Mathematics, Imperial College London, London, UK}
\email{tiago@icmc.usp.br}

%%%

\begin{abstract}
We study random iterations generated by infinitely many Lipschitz maps on non-compact complete separable metric spaces. We generalize classical displacement conditions ensuring the existence and uniqueness of invariant structures.  We prove two main results.  The first uses the graph transform operator to extend Elton's condition \cite{Elton1990} for the existence and uniqueness of invariant graphs.  The second uses the Markov operator to generalize a displacement condition for stationary measures, introduced by Diaconis and Freedman~\cite{DiaconisFreedman1999} and by Kloeckner \cite{Kloeckner2022}. We present several examples illustrating the applicability of our results.
\end{abstract}

\maketitle

\section{Introduction}

Let $(\Omega,\mathcal F,\mathbb{P})$ be a probability space and consider measurable maps $\theta\colon \Omega\to\Omega$ and $f\colon \Omega\times M\to M$, where $(M,d)$  is a complete separable metric space endowed with its Borel \(\sigma-\)algebra $\mathcal B(M)$, and \(\theta\) is assumed to be measurably invertible.  Consider the following problem: find a measurable map $\pi\colon \Omega\to M$ satisfying the  equation
\[
f(\omega,\pi(\omega))
= 
\pi(\theta(\omega)) \quad \quad \mbox{a.e.}
\]
A solution of this equation is called an \emph{invariant graph}. Such objects arise in several areas of dynamical systems, including \emph{iterated function systems} \cite{Stenflo2012}, \emph{fractal geometry} \cite{barnsley1989,barnsley2008}, \emph{forced systems} \cite{HADJILOUCAS_NICOL_WALKDEN_2002,JAGER_2007}, \emph{heterogeneous random networks} \cite{Bian2025}, and \emph{random walks on the circle} \cite{SALCEDO2026,malicet2017}.

We study this equation through the associated random iteration of maps. Namely, for each $\omega\in\Omega$, set $f_\omega(\cdot)=f(\omega,\cdot)$ and define
$$
f_\omega^n
=
f_{\theta^{n-1}(\omega)}
\circ
\cdots
\circ
f_\omega.
$$
The family of compositions $(f_\omega^n)_{(\omega,n)\in\Omega\times \mathbb N}$, together with the probability measure $\mathbb P$, defines a \emph{random iteration of maps} (RIM), which we  denote by the triple $(f,\theta,\mathbb P)$.\footnote{Throughout this paper, whenever no confusion can arise, we omit the underlying spaces and their $\sigma$-algebras. Also, the space $\Omega\times M$ is endowed with the product $\sigma$-algebra.}

We focus on \emph{stationary} random iterations of maps, that is, RIMs for which $\mathbb{P}$ is $\theta$-invariant. From now on, we assume that the maps $f_\omega$ are Lipschitz. This regularity assumption, together with the separability of $M$\footnote{The separability of $M$ ensures that $\mathcal{B}(M\times M)=\mathcal{B}(M)\otimes\mathcal{B}(M)$, which may fail for arbitrary metric spaces. Together with the continuity of each $f_\omega$, this guarantees the measurability of the Lipschitz constant $\omega\mapsto\|f_\omega\|$.} provides a well-established framework for the study of invariant structures \cite{Elton1987,Stenflo2012,steinsaltz1999}. For a Lipschitz map $g\colon M\to M$, we denote its Lipschitz constant by
\[
\|g\|=\sup_{x\neq y}\frac{d(g(x),g(y))}{d(x,y)}.
\]

We restrict our analysis to stationary RIMs $(f,\theta,\mathbb{P})$ satisfying the integrability condition%
\begin{equation}\label{king}
\int \log^{+}\|f_\omega\|\,d\mathbb{P}(\omega)<\infty.
\end{equation}
Condition \eqref{king} is the standing integrability assumption ensuring the existence of a $\theta$-invariant measurable function $\lambda\colon \Omega\to[-\infty,\infty)$ such that
\[
\lambda(\omega)
=
\lim_{n\to\infty}\frac1n\log\|f_\omega^n\|
\]
for $\mathbb P$-almost every $\omega$. We call $\lambda$ the \emph{maximal Lyapunov exponent} of the RIM $(f,\theta,\mathbb P)$.  A theorem of Elton \cite{Elton1990} asserts that a stationary RIM with negative maximal Lyapunov exponent admits a unique invariant graph provided that, for some $x_{0}\in M$, the displacement integrability condition
\begin{equation}\label{oldDIcondition}
\int \log^{+} d(x_{0},f_{\omega}(x_{0}))\, d\mathbb{P}(\omega)<\infty
\end{equation}
holds. More precisely, under these assumptions, for $\mathbb{P}$-almost every $\omega$, the limit
\[
\pi(\omega)
=
\lim_{n\to\infty}
f_{\theta^{-1}(\omega)}\circ \cdots\circ f_{\theta^{-n}(\omega)}(x_{0})
\]
exists and defines the unique invariant graph of the RIM.

This representation of the invariant graph implies that, for every $x\in M$, the laws\footnote{Given a random element $X\colon \Omega\to M$, the \emph{law} of $X$ is the pushforward measure $X_{*}\mathbb P$.} of the random variables $f_\omega^n(x)$ converge to the law of $\pi$. Observe that condition \eqref{oldDIcondition} is automatically satisfied for RIMs generated by finitely many maps or whenever $M$ is bounded.

However, the displacement condition~\eqref{oldDIcondition} fails to cover several important RIMs with heavy-tailed displacements, as illustrated by Examples~\ref{ex:translationsandcontractions} and~\ref{exampleiid}. In this paper, we generalize~\eqref{oldDIcondition} by introducing an intrinsic displacement condition that allows us to establish analogous results for systems not covered by Elton's classical theorem.
To formulate our hypothesis, we consider the graph transform $\Gamma$ acting on measurable maps $\psi\colon \Omega\to M$, defined by
\[
\Gamma\psi(\omega)
=
f_{\theta^{-1}(\omega)}
\bigl(\psi(\theta^{-1}(\omega))\bigr).
\]
A measurable map $\varphi\colon \Omega\to M$ is called a \emph{reference map} for a stationary RIM $(f, \theta,\mathbb{P})$ if 
\[
\int \log^{+} d\bigl(\varphi(\omega),\Gamma\varphi(\omega)\bigr)\,
d\mathbb{P}(\omega)<\infty.
\]
\begin{mtheorem}[Generalized displacements]
\label{Eltongener}
Let $(f,\theta,\mathbb{P})$ be a stationary RIM with negative maximal Lyapunov exponent. Assume that the set of reference maps is non-empty. Then, there is a unique (a.e.) invariant graph $\pi\colon \Omega\to M$ and  for every reference map $\varphi \colon \Omega\to M$ we have 
$$
\pi(\omega)
=
\lim_{n\to\infty}
f_{\theta^{-1}(\omega)}
\circ
\cdots
\circ
f_{\theta^{-n}(\omega)}
\bigl(\varphi(\theta^{-n}(\omega))\bigr)
$$
for $\mathbb P$-almost every $\omega$. Moreover, the sequence of laws of the random elements
\[
\omega\mapsto f_{\omega}^{n}(\varphi(\omega))
\]
converges to the law of $\pi$.
\end{mtheorem}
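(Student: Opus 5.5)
Write $\Gamma^n\varphi(\omega)=f_{\theta^{-1}(\omega)}\circ\cdots\circ f_{\theta^{-n}(\omega)}\bigl(\varphi(\theta^{-n}(\omega))\bigr)$, which is exactly the sequence in the statement. The plan is to show that $(\Gamma^n\varphi(\omega))_n$ is almost surely Cauchy. We do this by telescoping, controlling each increment through the product of a backward Lipschitz constant and a displacement evaluated along the backward orbit.

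\textbf{Step 1: the increments.} The increments of the sequence satisfy
\[
d\bigl(\Gamma^{n+1}\varphi(\omega),\Gamma^{n}\varphi(\omega)\bigr)\le \|f_{\theta^{-1}\omega}\circ\cdots\circ f_{\theta^{-n}\omega}\|\; D(\theta^{-n}\omega),
\]
where $D(\omega)=d(\Gamma\varphi(\omega),\varphi(\omega))$. This uses $\Gamma^{n+1}\varphi=\Gamma^n(\Gamma\varphi)$ and the fact that the two points $\Gamma\varphi(\theta^{-n}\omega)$ and $\varphi(\theta^{-n}\omega)$ are pushed forward by the same composition.

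\textbf{Step 2: the displacement factor.} Since $\log^+D\in L^1$ and $\theta$ preserves $\mathbb P$, Borel--Cantelli applied to the events $\{\log^+D\circ\theta^{-n}>\epsilon n\}$ gives $\frac1n\log^+D(\theta^{-n}\omega)\to0$ almost surely. Indeed, $\sum_n\mathbb P(\log^+D>\epsilon n)\le \epsilon^{-1}\int\log^+D<\infty$.

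\textbf{Step 3: the backward Lipschitz factor.} We need
\[
\limsup_n \tfrac1n\log\|f_{\theta^{-1}\omega}\circ\cdots\circ f_{\theta^{-n}\omega}\|\le\lambda(\omega)<0.
\]
This backward composition is $f^n_{\theta^{-n}\omega}$. Its norm is subadditive along the inverse system, so Kingman's theorem for $\theta^{-1}$ (which also preserves $\mathbb P$) gives a limit. This limit coincides with $\lambda$ almost surely: the two sequences have the same integrals $\int\log\|f^n_\omega\|\,d\mathbb P$ by invariance, and $\lambda$ is $\theta$-invariant. In the ergodic-decomposition sense both limits equal $\inf_n\frac1n\mathbb E[\log\|f^n\|\mid\mathcal I]$, with $\mathcal I$ the invariant $\sigma$-algebra, and $\mathcal I$ is the same for $\theta$ and $\theta^{-1}$. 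Where $\lambda=-\infty$ we use any negative rate instead.

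\textbf{Step 4: convergence and invariance.} Combining Steps 2 and 3, the increments decay exponentially, so the sequence is Cauchy and converges to some $\pi(\omega)$ by completeness. Invariance follows from continuity of $f_\omega$: we have $\Gamma^{n+1}\varphi(\theta\omega)=f_\omega(\Gamma^n\varphi(\omega))$, and letting $n\to\infty$ gives $\pi(\theta\omega)=f_\omega(\pi(\omega))$.

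\textbf{Step 5: independence of $\varphi$ and uniqueness.} Let $\varphi,\varphi'$ be two reference maps. Then
\[
d(\Gamma^n\varphi(\omega),\Gamma^n\varphi'(\omega))\le\|f^n_{\theta^{-n}\omega}\|\,d(\varphi,\varphi')(\theta^{-n}\omega),
\]
and this tends to $0$ provided $\frac1n\log^+d(\varphi,\varphi')(\theta^{-n}\omega)\to0$. The distance $d(\varphi,\varphi')$ need not be log-integrable, so we only have a finite measurable function $g$ composed with a measure-preserving map. For such $g$ we still get $\frac1n\log^+ g(\theta^{-n}\omega)\to0$ in probability, but not necessarily almost surely. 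This is the main obstacle.

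To get around it, we use that any invariant graph $\pi'$ is itself a reference map, since $\Gamma\pi'=\pi'$ and so $D\equiv0$. Then $\Gamma^n\pi'=\pi'$ for all $n$, and the distance $d(\pi',\Gamma^n\varphi)$ can be compared directly. Concretely, consider $h=d(\pi',\pi)$, which satisfies $h(\theta\omega)\le\|f_\omega\|h(\omega)$. Iterating gives $h(\omega)\le\|f^n_{\theta^{-n}\omega}\|\,h(\theta^{-n}\omega)$. Since $h\circ\theta^{-n}$ has the same law as $h$, it is bounded by some $K$ with probability at least $1-\delta$, uniformly in $n$. Hence $\mathbb P(h>\epsilon)\le\delta+\mathbb P(\|f^n_{\theta^{-n}\omega}\|K>\epsilon)\to\delta$ as $n\to\infty$. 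As $\delta$ is arbitrary, $h=0$ almost surely. This proves uniqueness, and the same argument applied to the limit obtained from any other reference map shows that the limit formula holds for every $\varphi$.

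\textbf{Step 6: laws.} Stationarity gives $\Gamma^n\varphi=f^n_{\theta^{-n}\omega}(\varphi(\theta^{-n}\omega))$, which is the random element $\omega\mapsto f^n_\omega(\varphi(\omega))$ composed with $\theta^{-n}$. It therefore has the same law as $f^n_\omega(\varphi(\omega))$. Almost sure convergence of $\Gamma^n\varphi$ to $\pi$ implies convergence in law, which gives the final claim.
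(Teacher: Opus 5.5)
Your proposal is correct and follows essentially the same route as the paper: the telescoping bound $d(\Gamma^n\varphi(\omega),\Gamma^{n+1}\varphi(\omega))\le\|f^n_{\theta^{-n}(\omega)}\|\,D(\theta^{-n}(\omega))$, Borel--Cantelli for subexponential growth of the displacement, equality of the backward and forward exponents, continuity of $f_\omega$ for invariance, uniqueness from stationarity plus contraction, and convergence in law from $\theta$-invariance. The differences are minor. You prove the backward/forward exponent identity yourself via Kingman for $\theta^{-1}$, where the paper cites Elton and Stark. Your tightness argument with $h=d(\pi',\pi)$ along backward orbits plays the role of the paper's synchronization proposition, which instead uses forward contraction, the bounded metric $\hat d$, and dominated convergence.
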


By setting the reference map as a constant, $\varphi(\omega)=x_{0}$, we recover the classical displacement condition~\eqref{oldDIcondition}. Indeed, by the $\theta$-invariance of $\mathbb{P}$:
\[
\int \log^{+}d(x_{0},f_{\theta^{-1}(\omega)}(x_{0}))\, d\mathbb{P}(\omega)
=
\int \log^{+}d(x_{0},f_{\omega}(x_{0}))\, d\mathbb{P}(\omega).
\]

Theorem~\ref{Eltongener} shows that the relevant integrability condition is not necessarily an absolute displacement condition based on a fixed point, but rather a relative displacement condition with respect to a suitable measurable map. This allows us to treat systems for which no point has integrable displacement, while a unique invariant graph still exists.

\begin{remark}
\emph{Let $\varphi \colon \Omega \to M$ be a reference map and  $\psi \colon \Omega \to M$ be measurable satisfying
$$
\int \log^{+}d(\varphi(\omega),\psi(\omega))\, d \mathbb{P}(\omega)<\infty.
$$
Then, \(\psi\) is also a reference map. This holds because the above condition defines an equivalence relation $\varphi \sim \psi$ on the space of measurable maps. This relation is preserved by the graph transform $\Gamma$. That is, if $\varphi \sim \psi$, then $\Gamma \varphi \sim \Gamma \psi$. Since $\Gamma \varphi \sim \varphi$ (as $\varphi$ is a reference map), transitivity yields $\Gamma \psi \sim \psi$, which is precisely the defining condition for $\psi$ to be a reference map. Therefore, Theorem~\ref{Eltongener} holds for any map in the equivalence class of $\varphi$.}
\end{remark}

We also apply Theorem \ref{Eltongener} in the study of independent and identically distributed random iterations of maps to show the existence and uniqueness of stationary measures in situations where Elton's result does not apply. Namely, let $E$ be a standard Borel space and let $\nu$ be a probability measure on $E$. Set
\[
\Omega=E^{\mathbb Z},
\qquad
\mathbb P_{\nu}=\nu^{\mathbb Z}.
\]
Let $\sigma\colon \Omega\to\Omega$ be the shift map. Given a measurable map $f\colon E\times M\to M$, we use the same notation to define $f\colon \Omega\times M\to M$ by
\[
f_\omega=f_{\omega_0}.
\]
The stationary RIM $(f,\sigma,\mathbb P_\nu)$ is called an \emph{independent and identically distributed (i.i.d.)} random iteration of maps. Since the shift map  depends only on $E$, we shall denote this system by $(f,\mathbb{P}_{\nu})$. A stationary measure is a fixed point of the  \emph{Markov operator} 
\[
\mathcal T\mu
=
\int f_{\alpha*}\mu\,d\nu(\alpha)
\]
acting on the space of probability measures on $M$.

Let $\mathcal{F}_{-\infty}$ be the $\sigma$-algebra generated by the canonical projections
$$
\Pi(\omega)=\omega_{-n}, \quad\quad n\geq 1.
$$
Similarly, we define $\mathcal F_{+\infty}$ using the nonnegative coordinates.

\begin{corollary}\label{coro:letac1}
Let $(f,\mathbb{P}_{\nu})$ be an i.i.d. RIM  with negative maximal Lyapunov exponent. Assume that the set of reference maps is non-empty. Let $\pi$ be the unique invariant graph given by Theorem~\ref{Eltongener}.  Then, the law of $\pi$  is the unique stationary measure. Moreover, for every  $\mathcal{F}_{-\infty}$-measurable   reference map $\psi\colon \Omega\to M$ we have 
\[
\mathcal T^{n}(\psi_{*}\mathbb{P}_{\nu})
\longrightarrow
\pi_{*}\mathbb P_{\nu}
\]
in the weak-$*$ topology.
\end{corollary}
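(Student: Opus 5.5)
Three things need to be established: the law $\pi_*\mathbb P_\nu$ is stationary, it is the unique stationary measure, and the iterates of the Markov operator converge to it.

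\textbf{Stationarity.} First I check that $\pi$ can be taken $\mathcal F_{-\infty}$-measurable. Fix a reference map $\varphi$ and form the approximants
\[
\pi_n(\omega)=f_{\omega_{-1}}\circ\cdots\circ f_{\omega_{-n}}\bigl(\varphi(\sigma^{-n}\omega)\bigr).
\]
If $\varphi$ depends only on the negative coordinates, then each $\pi_n$ is $\mathcal F_{-\infty}$-measurable. By Theorem~\ref{Eltongener} the $\pi_n$ converge almost everywhere to $\pi$, so $\pi$ is measurable modulo null sets with respect to $\mathcal F_{-\infty}$.

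If no such $\varphi$ is available, I instead take a conditional-type construction. Alternatively, I use uniqueness of the invariant graph directly, since stationarity does not actually need $\mathcal F_{-\infty}$-measurability of $\varphi$. The independence fact needed is only that $\pi(\omega)$ depends on coordinates $<0$. I would first try to obtain this from the approximants, choosing an $\mathcal F_{-\infty}$-measurable reference map in the equivalence class of the Remark; this choice is the delicate point.

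Granting this, $\omega_0$ is independent of $\pi(\omega)$ under $\mathbb P_\nu$. The invariance relation $\pi(\sigma\omega)=f_{\omega_0}(\pi(\omega))$ and $\sigma$-invariance of $\mathbb P_\nu$ then give
\[
\pi_*\mathbb P_\nu=\int f_{\alpha*}\pi_*\mathbb P_\nu\,d\nu(\alpha)=\mathcal T(\pi_*\mathbb P_\nu).
\]

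\textbf{Convergence.} Let $\psi$ be an $\mathcal F_{-\infty}$-measurable reference map. Again by independence of $\omega_0,\dots,\omega_{n-1}$ from $\mathcal F_{-\infty}$, together with shift invariance, the law of
\[
\omega\mapsto f_{\omega_{-1}}\circ\cdots\circ f_{\omega_{-n}}\bigl(\psi(\sigma^{-n}\omega)\bigr)
\]
equals $\mathcal T^n(\psi_*\mathbb P_\nu)$. To verify this, shift by $\sigma^{n}$: the variable becomes $f^n_\omega(\psi(\omega))$, where $\psi(\omega)$ is independent of $(\omega_0,\dots,\omega_{n-1})$. Theorem~\ref{Eltongener} gives almost sure convergence of these variables to $\pi$. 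Almost sure convergence implies convergence in law, so $\mathcal T^n(\psi_*\mathbb P_\nu)\to\pi_*\mathbb P_\nu$ weakly-$*$.

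\textbf{Uniqueness.} Let $\mu$ be any stationary measure. I build on $\Omega\times M$ (or an enlarged space) a random point $X$ with law $\mu$, independent of the coordinates. The delicate issue is that reference maps must be defined on $\Omega$. So I realize $\mu$ through a measurable map of the coordinates, which requires extra randomness, or I enlarge $E$. The cleanest route avoids this: apply the Markov chain argument pathwise. Choose a stationary sequence $X_{-n}$ with $X_{-n+1}=f_{\omega_{-n}}(X_{-n})$ and $X_{-n}\sim\mu$, independent of $\omega_{-n},\dots,\omega_{-1}$. By Kolmogorov extension this forms a stationary process on an enlarged probability space. The difference
\[
d\bigl(\pi_n,\;f_{\omega_{-1}}\circ\cdots\circ f_{\omega_{-n}}(X_{-n})\bigr)\le\|f^n_{\sigma^{-n}\omega}\|\,d(\varphi(\sigma^{-n}\omega),X_{-n}).
\]
The Lipschitz factor decays exponentially because $\lambda<0$. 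For the second factor, the distance term is a stationary sequence, hence $o(e^{\varepsilon n})$ almost surely; this is a Borel--Cantelli argument, and it requires only finiteness, not integrability. Hence $X_0=\lim$ has law $\pi_*\mathbb P_\nu$, so $\mu=\pi_*\mathbb P_\nu$.

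The main obstacle is this uniqueness step: handling an arbitrary $\mu$ without integrability, via the stationary coupling and the subexponential growth of stationary sequences.
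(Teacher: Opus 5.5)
Your stationarity (existence) step has a genuine gap. Everything there rests on $\pi$ being $\mathcal{F}_{-\infty}$-measurable, and you only establish this when the given reference map already depends on the negative coordinates. The hypothesis provides no such map, and you give no way to find one in the equivalence class of a given $\varphi$; the obvious candidate, $\pi$ itself, is circular. Your uniqueness argument cannot supply existence either, since it starts from a stationary $\mu$.

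The missing idea, which is how the paper proceeds, is to abandon reference maps at this point and use a fixed point $p\in M$. By invariance and $\lambda<0$,
\[
d\bigl(\pi(\sigma^{n}\omega),f^{n}_{\omega}(p)\bigr)=d\bigl(f^{n}_{\omega}(\pi(\omega)),f^{n}_{\omega}(p)\bigr)\le\|f^{n}_{\omega}\|\,d(\pi(\omega),p)\to0
\]
almost surely. With $\hat d=d/(1+d)$, dominated convergence and $\sigma$-invariance of $\mathbb{P}_{\nu}$ then give
\[
\int\hat d\bigl(\pi(\omega),f_{\omega_{-1}}\circ\cdots\circ f_{\omega_{-n}}(p)\bigr)\,d\mathbb{P}_{\nu}(\omega)\to0 .
\]
Hence a subsequence of the $\mathcal{F}_{-\infty}$-measurable maps $f_{\omega_{-1}}\circ\cdots\circ f_{\omega_{-n_k}}(p)$ converges a.e.\ to $\pi$. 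The point is that $p$ need not be a reference map, so its backward orbit need not converge almost surely; convergence in probability is all that measurability requires. A posteriori, $\pi$ is then the $\mathcal{F}_{-\infty}$-measurable reference map you were looking for.

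Your convergence step is correct and is the paper's generalized Letac principle. For uniqueness you take a different route from the paper, which invokes Proposition~\ref{prop:atmost2} (ergodic decomposition, Birkhoff's theorem for the chain, and synchronization $\|f^n_\omega\|\to0$). Your stationary-coupling argument avoids the ergodic decomposition at the cost of building a jointly stationary chain, but it contains a false claim. A stationary sequence $Y_n$ of finite random variables need not be $o(e^{\varepsilon n})$ almost surely. Borel--Cantelli gives this only if $\mathbb{E}\log^{+}Y_0<\infty$, and for i.i.d.\ $Y_n$ with $\mathbb{E}\log^{+}Y_0=\infty$ the second Borel--Cantelli lemma yields $Y_n>e^{\varepsilon n}$ infinitely often.

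The fix is tightness. The variable $d(\varphi(\sigma^{-n}\omega),X_{-n})$ has a law independent of $n$, and $\|f^{n}_{\sigma^{-n}\omega}\|\to0$ a.s., so your bound tends to $0$ in probability. Since $X_0$ does not depend on $n$ and $\pi_n\to\pi$ a.s., this forces $X_0=\pi$ a.s., hence $\mu=\pi_{*}\mathbb{P}_{\nu}$.
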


In Example~\ref{exampleiid}, we illustrate the corollary with an i.i.d. RIM featuring a non-integrable fixed-point displacement.
The preceding theorem establishes convergence of the iterates of the Markov operator to the stationary measure, without providing a rate of convergence. Under additional assumptions of contraction on average, however, quantitative convergence rates can be obtained.

The study of random iterations satisfying average contractivity conditions goes back to Barnsley and Elton \cite{BarnsleyElton1988}. More precisely, given $q>0$ and $r\in(0,1)$, we say that an i.i.d. RIM $(f,\mathbb{P}_{\nu})$ is \emph{$r$-contracting on $L^{q}$-average between points} if

$$
\int d(f_{\alpha}(x),f_{\alpha}(y))^{q}\,d\nu(\alpha)
\leq r\,d(x,y)^{q}
$$
for every $x,y\in M$. When $\nu$ is supported on a finite set, Barnsley and Elton proved the existence of a unique attractive stationary measure by establishing convergence of the ``unilateral'' reverse-order iteration
\begin{equation}\label{eq:unilateral}
f_{\omega_0}\circ\cdots\circ f_{\omega_n}(x_0),
\qquad n\geq 0.
\end{equation}

Subsequently, Diaconis and Freedman \cite{DiaconisFreedman1999} studied i.i.d. RIMs satisfying
\begin{equation}\label{diaconis}
\int \|f_{\alpha}\|\,d\nu(\alpha)<\infty,
\qquad
\int \log \|f_{\alpha}\|\,d\nu(\alpha)<0,
\end{equation}
together with the displacement condition
\begin{equation*}
\int d(x_0,f_\alpha(x_0))\,d\nu(\alpha)<\infty
\end{equation*}
for some $x_0\in M$. They established the existence of a unique attractive stationary measure, by proving convergence of \eqref{eq:unilateral}. Moreover, they obtained an exponential convergence rate to the stationary measure in the Prokhorov metric.

The assumptions \eqref{diaconis} imply that the system is $r$-contracting on $L^{q}$-average between points for some $q>0$ and $r\in(0,1)$; see \cite[Lemma 3.9]{Kloeckner2022}. Thus, the systems studied by Diaconis and Freedman also fall within the framework of contraction on average introduced above.

More recently, Kloeckner \cite{Kloeckner2022} used the Wasserstein metric to extend the results of Barnsley--Elton \cite{BarnsleyElton1988} and Diaconis--Freedman \cite{DiaconisFreedman1999}. For RIMs that are $r$-contracting on $L^{q}$-average between points, he established existence and uniqueness of the stationary measure, together with convergence rates in Wasserstein distance, under the displacement condition
\begin{equation}\label{disconditionKloc}
\int d(x_0,f_\alpha(x_0))^{q}\,d\nu(\alpha)<\infty
\end{equation}
for some $x_0\in M$; see \cite[Theorem~3.1]{Kloeckner2022}.

An interesting feature of Kloeckner's approach is that the existence and uniqueness of the stationary measure are established without relying on convergence of the reverse-order iteration. We now present a slight generalization of his result.

To describe the Wasserstein framework, we first recall the definition of a transport plan. Given two Borel probability measures \(\mu_{1}\) and \(\mu_{2}\) on \(M\), a \emph{transport plan} between \(\mu_{1}\) and \(\mu_{2}\) is a probability measure \(\Upsilon\) on \(M\times M\) such that
\[
\Pi_{1*}\Upsilon=\mu_{1}
\quad\text{and}\quad
\Pi_{2*}\Upsilon=\mu_{2},
\]
where \(\Pi_{1},\Pi_{2}\colon M\times M\to M\) are the canonical projections. We denote by \(\Gamma(\mu_{1},\mu_{2})\) the set of all transport plans between \(\mu_{1}\) and \(\mu_{2}\). Denote by $\mathcal{P}_{1}(M)$ the space of all Borel probability measures on $M$. For $q> 0$, we define the \emph{extended Wasserstein metric} \(W_{q}\colon \mathcal{P}_{1}(M)\times \mathcal{P}_{1}(M)\to[0,\infty]\) by
\[
W_{q}(\mu_{1},\mu_{2})
=
\left(
\inf_{\Upsilon\in\Gamma(\mu_{1},\mu_{2})}
\int d(x,y)^{q}\,d\Upsilon(x,y)
\right)^{\min\{1,1/q\}}.
\]
By extended metric, we mean that $W_{q}$ has all properties of a genuine metric, but is allowed to take  the value \(+\infty\).  See Villani \cite{Villani2009}. Observe that
\[
W_{q}(\delta_{x_{0}},\mathcal{T}\delta_{x_{0}})^{\max\{1,q\}}
=\int d(x_{0},f_{\alpha}(x_{0}))^q\, d\nu(\alpha).
\]
This motivates extending condition~\eqref{disconditionKloc} to a measure condition by considering the Wasserstein displacement between a measure and its image under the Markov operator, as in the following:

\begin{mtheorem} \label{tb}
Let $(f,\mathbb{P}_{\nu})$ be an i.i.d. RIM that is $r$-contracting on $L^{q}$-average between points.    Assume that there exists $\eta\in \mathcal{P}_{1}(M)$ such that
\[
W_{q}(\eta,\mathcal{T}\eta)<\infty.
\]
Then, there is a unique stationary measure. Additionally, the Markov operator is a contraction, with rate \(r^{\min\{1,1/q\}}\) on the space of probability measures that are at finite \(W_q-\)distance from their images under the Markov operator.
\end{mtheorem}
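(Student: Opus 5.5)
Write $p=\min\{1,1/q\}$, so that $W_q^{1/p}$ is the transport cost $C_q(\mu_1,\mu_2)=\inf_{\Upsilon}\int d(x,y)^q\,d\Upsilon$. The plan is a Banach fixed point argument on a suitable complete extended metric space. The first step is the basic contraction estimate: for all $\mu_1,\mu_2\in\mathcal P_1(M)$,
\[
W_q(\mathcal T\mu_1,\mathcal T\mu_2)\le r^{p}\,W_q(\mu_1,\mu_2).
\]
To prove it, take a plan $\Upsilon\in\Gamma(\mu_1,\mu_2)$ and push $\nu\otimes\Upsilon$ forward by $(\alpha,x,y)\mapsto (f_\alpha(x),f_\alpha(y))$. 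This gives a plan between $\mathcal T\mu_1$ and $\mathcal T\mu_2$; measurability holds because $f$ is jointly measurable and $M$ is separable. Its cost is
\[
\iint d(f_\alpha(x),f_\alpha(y))^q\,d\nu(\alpha)\,d\Upsilon(x,y)\le r\int d(x,y)^q\,d\Upsilon
\]
by Tonelli and the $L^q$-average contraction. Taking the infimum over $\Upsilon$ and raising to the power $p$ gives the claim. This step also proves the final sentence of the theorem once we know that $\mathcal T$ preserves the set $\mathcal D=\{\mu: W_q(\mu,\mathcal T\mu)<\infty\}$. That is immediate: $W_q(\mathcal T\mu,\mathcal T^2\mu)\le r^p W_q(\mu,\mathcal T\mu)<\infty$.

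The second step produces the fixed point. Since $W_q(\mathcal T^n\eta,\mathcal T^{n+1}\eta)\le r^{np}W_q(\eta,\mathcal T\eta)$ and $W_q$ satisfies the triangle inequality (for $q<1$ because $d^q$ is a metric, for $q\ge1$ by Minkowski), the sequence $(\mathcal T^n\eta)$ is $W_q$-Cauchy and all its terms are at finite distance from $\eta$. We then need completeness. Consider the class $\mathcal P_\eta=\{\mu: W_q(\mu,\eta)<\infty\}$. This is a genuine metric space, and I expect it to be complete, mirroring the classical Wasserstein space $\mathcal P_q$ with the moment condition replaced by finite distance to $\eta$. This completeness is the main obstacle. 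I would prove it via the standard route: a $W_q$-Cauchy sequence is tight, since $W_q$ convergence controls Lévy--Prokhorov-type distances and $M$ is complete and separable. Extract a weak limit $\mu_\infty$. Lower semicontinuity of the transport cost under weak convergence, obtained by taking limits of optimal plans, which are tight, together with Fatou, gives $W_q(\mu_n,\mu_\infty)\le\liminf_m W_q(\mu_n,\mu_m)\to0$. Alternatively, one can glue optimal plans along a fast Cauchy subsequence via the gluing lemma, build random variables $X_k$ on one probability space with $\sum_k \mathbb E[d(X_k,X_{k+1})^q]^{p}<\infty$, and take an almost sure limit. This works for all $q>0$ because $d^{\min(q,1)}$ is a metric.

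With completeness in hand, let $\mu^*=\lim\mathcal T^n\eta$. The contraction estimate gives continuity of $\mathcal T$ in $W_q$, so $\mathcal T\mu^*=\mu^*$.

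The last step is uniqueness among all stationary measures, not only those in $\mathcal P_\eta$. Given another stationary $\mu'$, we need $W_q(\mu',\mu^*)<\infty$, which is not automatic. For this I would use Theorem~\ref{Eltongener}/Corollary~\ref{coro:letac1}-type arguments. The $L^q$-average contraction implies $\int\log\|f_\alpha\|\,d\nu<0$ in the relevant sense only for Lipschitz constants, which is not available, so instead I would argue directly. For $x,y\in M$, we have $\int d(f^n_\omega x,f^n_\omega y)^q\,d\mathbb P\le r^n d(x,y)^q$. Hence for any two measures $\mu,\mu'$, coupling them by the product plan $\mu\otimes\mu'$ and applying the same random maps yields
\[
\int \min\{1,d(f^n_\omega x,f^n_\omega y)\}\,d\mathbb P\,d\mu\,d\mu'\to0
\]
by dominated convergence, since the integrand tends to $0$ for each fixed $(x,y)$. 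So $\mathcal T^n\mu$ and $\mathcal T^n\mu'$ become close in a bounded-Lipschitz (weak) metric. Applying this to stationary $\mu'$ and $\mu^*$ gives $\mu'=\mu^*$. This avoids needing finite $W_q$ distance, and is the route I would take for uniqueness.
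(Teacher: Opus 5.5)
Your proof is correct. For the contraction estimate, the invariance of the relevant class under $\mathcal T$, and the Banach fixed point step, it coincides with the paper's argument; you also sketch the completeness of $(\mathcal P^q_\eta,W_q)$, which the paper omits.

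The genuine difference is in global uniqueness.

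\begin{itemize}
\item The paper first upgrades the mean bound $\mathbb E\, d(f^n_\omega x,f^n_\omega y)^q\le r^n d(x,y)^q$ to an almost sure bound $d(f^n_\omega x,f^n_\omega y)\le c_{x,y}(\omega)\lambda^n d(x,y)$, via a Tonelli/summability argument. It then invokes Proposition~\ref{prop:atmost2}, whose proof uses the ergodic decomposition and Birkhoff's theorem for Markov chains at typical points of two ergodic stationary measures.
\item You instead couple $\mathcal T^n\mu$ and $\mathcal T^n\mu'$ directly, running the same maps from independent initial points under $\mathbb P\otimes\mu\otimes\mu'$. The mean bound gives convergence in probability for each pair $(x,y)$. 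Dominated convergence then gives $\int\min\{1,d(f^n_\omega x,f^n_\omega y)\}\to0$, so the bounded-Lipschitz distance between $\mathcal T^n\mu$ and $\mathcal T^n\mu'$ tends to zero.
\end{itemize}

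One step you should state explicitly: when $x\sim\mu$ is independent of $\omega$, the point $f^n_\omega(x)$ has law $\mathcal T^n\mu$. This is a one-line induction using the independence of $\omega_n$ from $\omega_0,\dots,\omega_{n-1}$.

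Your route is more elementary, since it needs no ergodic theory and no pathwise synchronization. It also yields more: taking $\mu'=\mu^*$ shows $\mathcal T^n\mu\to\mu^*$ weakly for every $\mu\in\mathcal P_1(M)$, including measures at infinite $W_q$-distance from $\mu^*$. The paper's route is more modular: Proposition~\ref{prop:atmost2} is a general synchronization-implies-uniqueness principle, and the pathwise exponential bound is of independent interest.

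Finally, the sentence about $\int\log\|f_\alpha\|\,d\nu<0$ is garbled and can be dropped. Average contraction between points gives no control of Lipschitz constants, which is exactly why Theorem~\ref{Eltongener} is not available here.
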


In \cite[Theorem~3.1]{Kloeckner2022}, Kloeckner establishes the contraction property on the space of probability measures with finite $q$-th moments, equivalently, those at finite $W_q$-distance from a Dirac measure. The contraction estimate in our setting follows the same coupling argument, but our displacement condition allows us to work beyond this space. Moreover, Theorem~\ref{tb} provides an intrinsic characterization of the space on which the Markov operator acts as a contraction.

We also provide examples showing that our displacement condition applies to systems beyond the finite-moment setting of Kloeckner's theorem. See Example~\ref{Example:toy}.

\begin{remark}
\label{remark:thm2}
\emph{The set of probability measures $\eta$ such that $W_{q}(\eta, \mathcal{T}\eta)<\infty$ coincides with the set of probability measures that is at a finite $W_{q}$-distance from the stationary measure.
}
\end{remark}

\section{Examples}\label{section:examples}

We now present an example showing that the classical displacement condition~\eqref{oldDIcondition} may fail, while the conclusion of Theorem~\ref{Eltongener} still holds by using a non-constant reference map. Our example is a particular case of the following corollary of Theorem~\ref{Eltongener}.

\begin{corollary}
\label{example1}
Consider a stationary RIM $(g,\theta,\mathbb P)$ on the line
$\mathbb R$ such that
\[
g_{\omega}(x)=a(\omega)x+b(\omega),
\]
where $a$ and $b$ are non-negative measurable functions. Assume that the maximal Lyapunov exponent is negative and that
\[
\int
\log^{+}
\bigl(a(\omega)b(\theta^{-1}(\omega))\bigr)
\,d\mathbb P(\omega)<\infty.
\]
Then the conclusions of Theorem~\ref{Eltongener} hold.
\end{corollary}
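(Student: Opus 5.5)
We will derive the corollary from Theorem~\ref{Eltongener} by producing an explicit reference map. Since the maximal Lyapunov exponent is assumed negative and the maps are Lipschitz with $\|g_\omega\|=a(\omega)$, it only remains to check that the set of reference maps is non-empty. Condition \eqref{king} is part of the standing assumptions; it is also implicit in the hypothesis that the Lyapunov exponent is defined.

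The natural candidate is $\varphi(\omega)=b(\theta^{-1}(\omega))$, that is, $\varphi=\Gamma 0$, the image of the zero map under the graph transform. Then
\[
\Gamma\varphi(\omega)=g_{\theta^{-1}(\omega)}\bigl(\varphi(\theta^{-1}(\omega))\bigr)
= a(\theta^{-1}(\omega))\,b(\theta^{-2}(\omega))+b(\theta^{-1}(\omega)),
\]
and hence
\[
d\bigl(\varphi(\omega),\Gamma\varphi(\omega)\bigr)=a(\theta^{-1}(\omega))\,b(\theta^{-2}(\omega)),
\]
using $a,b\ge 0$. The additive term $b(\theta^{-1}(\omega))$, which carries the heavy tail and defeats condition~\eqref{oldDIcondition}, cancels exactly. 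By $\theta$-invariance of $\mathbb P$, applied to the substitution $\omega\mapsto\theta(\omega)$, we get
\[
\int \log^{+}\bigl(a(\theta^{-1}(\omega))\,b(\theta^{-2}(\omega))\bigr)\,d\mathbb P(\omega)
=\int \log^{+}\bigl(a(\omega)\,b(\theta^{-1}(\omega))\bigr)\,d\mathbb P(\omega)<\infty.
\]
So $\varphi$ is a reference map. Measurability of $\varphi$ is immediate, since $b$ is measurable and $\theta$ is measurably invertible.

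Theorem~\ref{Eltongener} then gives the unique invariant graph, the pullback convergence for every reference map, and the convergence of the laws. The only delicate point is choosing the right reference map. The constant map $0$ fails, because its displacement is $b(\theta^{-1}(\omega))$, which need not be $\log^+$-integrable. Taking one step of the graph transform converts this absolute displacement into the product $a\cdot b\circ\theta^{-1}$, which is exactly the hypothesis. Everything else is bookkeeping with the invariance of $\mathbb P$.
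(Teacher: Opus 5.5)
Your proposal is correct and follows the paper's own argument: you use the same reference map $\varphi(\omega)=b(\theta^{-1}(\omega))$, the same cancellation giving $d(\varphi(\omega),\Gamma\varphi(\omega))=a(\theta^{-1}(\omega))\,b(\theta^{-2}(\omega))$, and $\theta$-invariance of $\mathbb P$ to reduce to the hypothesis. Your extra checks ($\|g_\omega\|=a(\omega)$, measurability of $\varphi$, and $\varphi=\Gamma 0$) are accurate details that the paper leaves implicit.
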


Indeed, take
\(
\varphi(\omega)=b(\theta^{-1}(\omega)).
\)
Then
\[
\begin{aligned}
|\varphi(\omega)-\Gamma\varphi(\omega)|
&=
\left|
b(\theta^{-1}(\omega))
-
\left(
a(\theta^{-1}(\omega))b(\theta^{-2}(\omega))
+
b(\theta^{-1}(\omega))
\right)
\right| =
a(\theta^{-1}(\omega))b(\theta^{-2}(\omega)).
\end{aligned}
\]
Hence, the existence of a reference map
follows from the hypothesis of the corollary.

\begin{example}\label{ex:translationsandcontractions}

Let us consider an example given by a stationary random iteration consisting of translations and contractions. Consider the probability space $(I,\mathcal{B},m)$, where $I=(0,1)$, $\mathcal{B}$ is the usual Borel $\sigma$-algebra of $I$ and $m$ is the Lebesgue measure. Let $Q\colon(0,1)\to\mathbb R$ be a random variable, defined on $(I,\mathcal B,m)$, with law\footnote{Given a Borel probability measure $\mu$ on $\mathbb R$, set $F_\mu(x)=\mu((-\infty,x])$ and let $Q_\mu$ denote its quantile function: $Q_\mu(t)=\inf\{x\in\mathbb R\colon F_\mu(x)\geq t\}, t\in(0,1).$ Thus, $Q_{*}m=\mu$. This is the classical quantile construction, which allows us to build, from the Lebesgue measure, a random variable with any prescribed law.} $\operatorname{Cauchy}(0,1)$. Let $\sigma$ be the shift map on the product space $\Omega=I^{\mathbb{Z}}$ and let $\mathbb{P}_{m}$ denote  the product measure $m^{\mathbb{Z}}$ on the product space $(I^{\mathbb{Z}},\mathcal{B}^{\bigotimes \mathbb{Z}})$.

Consider the stationary RIM $(g,\sigma,\mathbb{P}_{m})$  with $g$ given  by
\[
g_\omega(x) = a(\omega) x + b(\omega)
\]
where 
$$
 a(\omega)=
\min \{1, e^{-Q(\omega_{-1})}\} 
\quad \mbox{and} \quad b(\omega)=e^{Q(\omega_{0})}.
$$ 

A direct computation gives 
\[
\int
\log^{+}
\bigl(a(\omega)b(\sigma^{-1}(\omega))\bigr)
\,d\mathbb P_{m}(\omega)<\infty.
\]

Furthermore, note that 
\(
\log a(\omega) = - \max \{Q(\omega_{-1}), 0\}
\). 
Because $\mathbb P_m$ is invariant under the shift map $\sigma$, it follows that the maximal Lyapunov exponent is \(-\infty\). On the other hand, 
\[
\int \log^+ b(\omega) \, d\mathbb P_m(\omega) = + \infty.
\]
Thus, the classical displacement condition fails, whereas the condition of Theorem~\ref{Eltongener} holds.

\end{example}

\begin{remark}
\emph{In the i.i.d. RIM setting, with \(a>0\) almost surely, the displacement assumption of Corollary~\ref{example1} implies Elton's condition~\eqref{oldDIcondition}. See~\cite[Theorem~1.6]{Vervaat1979}.}
\end{remark}

The following corollary provides a different class of examples.

\begin{corollary}\label{coro:mqexample}
Consider a stationary RIM $(f,\theta,\mathbb P)$ on the line $\mathbb R$ with $f$ given by
$$
f_{\omega}(x)=L(x)+b(\omega)
$$
where  $L\colon \mathbb{R}\to \mathbb{R}$ is a contraction and  $b\colon \Omega\to \mathbb{R}$ is a measurable function.  Assume that
$$
\int \log^{+}|L(x)|\, d\mu(x)<\infty
$$ 
where $\mu=b_{*}\mathbb{P}$. Then the conclusions of Theorem \ref{Eltongener} hold.
\end{corollary}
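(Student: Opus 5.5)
We apply Theorem~\ref{Eltongener}, so two things must be checked: the maximal Lyapunov exponent is negative, and a reference map exists.

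\textbf{Negative exponent.} Since $L$ is a contraction, its Lipschitz constant satisfies $\|L\|\le c<1$. Every $f_\omega$ is $L$ followed by a translation, so $\|f_\omega\|=\|L\|$. The composition $f^n_\omega$ is then Lipschitz with constant at most $c^n$, which gives $\lambda\le\log c<0$. The integrability condition \eqref{king} holds trivially because $\log^+\|f_\omega\|=0$.

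\textbf{Reference map.} Following the pattern of Corollary~\ref{example1}, I take
\[
\varphi(\omega)=b(\theta^{-1}(\omega)).
\]
This choice makes the translation term cancel in the graph transform:
\[
\Gamma\varphi(\omega)=f_{\theta^{-1}\omega}\bigl(b(\theta^{-2}\omega)\bigr)=L\bigl(b(\theta^{-2}\omega)\bigr)+b(\theta^{-1}\omega),
\]
and therefore
\[
|\varphi(\omega)-\Gamma\varphi(\omega)|=\bigl|L\bigl(b(\theta^{-2}\omega)\bigr)\bigr|.
\]
By $\theta$-invariance of $\mathbb P$ (applied with $\theta^{-2}$, which is measure preserving since $\theta$ is invertible and preserves $\mathbb P$), we get
\[
\int \log^+\bigl|L\bigl(b(\theta^{-2}\omega)\bigr)\bigr|\,d\mathbb P(\omega)=\int\log^+|L(b(\omega))|\,d\mathbb P(\omega)=\int \log^+|L(x)|\,d\mu(x),
\]
where the last equality is the change of variables with $\mu=b_*\mathbb P$. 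This is finite by hypothesis, so $\varphi$ is a reference map. Measurability of $\varphi$ is clear, since $b$ is measurable and $\theta^{-1}$ is measurable.

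Theorem~\ref{Eltongener} now applies and yields all of its conclusions.

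There is no real obstacle. The only substantive step is choosing $\varphi$ so that the translation term cancels and the remaining displacement becomes $L$ evaluated along $b$. After that, stationarity converts the integral into exactly the hypothesis on $\mu$.
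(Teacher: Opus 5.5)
Your proof is correct and matches the paper's argument: the same reference map $\varphi(\omega)=b(\theta^{-1}(\omega))$, the same cancellation giving $|\varphi(\omega)-\Gamma\varphi(\omega)|=|L(b(\theta^{-2}(\omega)))|$, and $\theta$-invariance to reduce the integral to the hypothesis on $\mu=b_*\mathbb{P}$. You also justify the negative exponent via $\|f_\omega\|=\|L\|<1$, and you check the integrability condition \eqref{king}; the paper only asserts the former and does not mention the latter.
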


Indeed, the maximal Lyapunov exponent is negative. Taking $\varphi(\omega)=b(\theta^{-1}(\omega))$, we have 
\[
\begin{split}
|\varphi(\omega)-\Gamma \varphi(\omega)|
=|b(\theta^{-1}(\omega))-f_{\theta^{-1}(\omega)} (b(\theta^{-2}(\omega)))|
=| L(b(\theta^{-2}(\omega)))|.
\end{split}
\]
Hence, 
$$
\int \log^{+}|\varphi(\omega)-\Gamma\varphi(\omega)|\, d\mathbb{P}(\omega)
\leq \int \log^{+} |L(b(\theta^{-2}(\omega)))|\, d\mathbb{P}(\omega)= \int \log^{+} |L(x)|\, d\mu(x)<\infty
$$
showing that the hypotheses of  Theorem \ref{Eltongener} are satisfied.

Note that finiteness of the integral above is automatically satisfied if \(L\) is bounded. In this case, \(b\) can have any law. We next present an example in which \(L\) is unbounded.  Using Corollary~\ref{coro:mqexample}, we obtain examples in the i.i.d. case for which Theorem~\ref{Eltongener}, Corollary~\ref{coro:mqexample}, and Theorem~\ref{tb} apply, while the classical displacement condition fails.

\begin{example}\label{exampleiid}
Let $(I,\mathcal B,m)$ and $Q$ be as in
Example~\ref{ex:translationsandcontractions}.
Consider the i.i.d. RIM $(f,\mathbb{P}_{m})$ with
$$
f_{\omega}(x)= L(x) + b(\omega)
\quad \mbox{where} \quad 
L(x) = \frac{1}{2}\log(1+|x|) 
\quad \text{and} \quad 
b(\omega) = e^{Q(\omega_{0})}.
$$
Since $Q$ has  law $\mbox{Cauchy}(0,1)$, denoting by $\mu$ the law of $b(\omega)=e^{Q(\omega_{0})}$, we have
\[
\begin{split}
\int \log^{+} \log (1+|x|)\, d\mu(x)&=\frac{1}{\pi}\int_{\mathbb{R}} \frac{\log^{+} \log(1+e^{x})}{1+x^2}\, dx<\infty.
\end{split}
\]
Therefore, Theorem $\ref{Eltongener}$ and Corollary \ref{coro:letac1} apply. In contrast, note that 
\[
\int \log^+b(\omega) \, d\mathbb P_m(\omega) = \int Q^+(\omega_0) \, dm(\omega_0) = + \infty
\]
so the classical fixed-point displacement condition fails in the Euclidean metric.

This example illustrates Theorem~\ref{tb}. Since $\mathcal{T}\mu = \mbox{Law}(b + L(b'))$, where $b,b'$ are independent with law \(\mu\). Let $\Psi(x,y) = (x,L(y)+x)$ and consider $\gamma = \Psi_*(\mu \otimes \mu)$,  then $\gamma$ is a coupling, hence, for any $0 < q < 1$
\[
\begin{split}
W_q(\mathcal{T}\mu,\mu)
&\le \int |x+L(y)-x|^q \, d(\mu \otimes \mu)(x,y)
=\frac{1}{2^q\pi}\int_{\mathbb{R}} \frac{(\log(1+e^x))^q}{1+x^2}\,dx < \infty.
\end{split}
\]
However, the displacement condition does not apply: for every $x_0\in\mathbb{R}$ and every $q>0$:
$$
\int_{\mathbb{R}}|f_b(x_0)-x_0|^q\,d\mu(b)=\infty.
$$
\end{example}

We now present another case where Theorem~\ref{tb} remains applicable, whereas Kloeckner's displacement condition fails for $q>1$.

\begin{example}\label{Example:toy}
Let $(I,\mathcal{B},m)$ and $Q$ be as in Example \ref{ex:translationsandcontractions}. Let $X\colon I\to \mathbb{R}$ be a random variable satisfying 
$$
\int |X|^q\, dm<\infty
$$ 
for some $q\geq1$ and set $\eta=X_{*}m$.
Consider the i.i.d. RIM $(f,\mathbb{P}_{m})$ with $f\colon I^{\mathbb{Z}}\times \mathbb{R}\to \mathbb{R}$  given by
\begin{equation}\label{toy}
f_\omega(x)=\alpha x+P(\omega_{0})
\end{equation}
where $0<\alpha<1$ and $P\colon I\to \mathbb{R}$ is a random variable with law equal to $\mbox{Cauchy}(0,1) * \eta$  where $*$ denotes the convolution. We take as a reference measure $\mu=\mbox{Cauchy}(0,\frac{1}{1-\alpha})$. We claim that
\[
W_1(\mu,\mathcal{T}\mu)<\infty.
\]
To this end, let $S\colon I \to \mathbb{R}$ be a random variable with law $\mu$.  We have that the random variable 
$$
\omega\mapsto f_{\omega}(S(\omega_{-1}))
$$
has law equal to $\mathcal{T}\mu$. On the other hand,
$$
f_{\omega}(S(\omega_{-1}))=\alpha S(\omega_{-1})+P(\omega_{0}).
$$
Note that the random variables $\omega\mapsto P(\omega_{0})$ and $\omega\mapsto Q(\omega_{0})+X(\omega_{1})$ have the same law. Hence, 
$$
f_{\omega}(S(\omega_{-1}))\overset{d}{=} \alpha S(\omega_{-1})+Q(\omega_{0})+X(\omega_{1})
$$
where $\overset{d}{=}$ denotes equality in law. Using the stability of Cauchy distributions under independent linear combinations, we have that $\alpha S(\omega_{-1})+Q(\omega_{0})$ has law
\[
\operatorname{Cauchy}
\left(0,
\frac{\alpha}{1-\alpha}+1
\right)
=
\operatorname{Cauchy}
\left(0,
\frac{1}{1-\alpha}
\right).
\]
Therefore, 
$$
f_{\omega}(S(\omega_{-1}))\overset{d}{=} S(\omega_{-1})+X(\omega_{1})
$$

The law of $(S(\omega_{-1}),S(\omega_{-1})+X(\omega_{1}))$ is a coupling between $\mu$ and $\mathcal{T}\mu$, which implies that 
$$
W_{q}(\mu,\mathcal{T}\mu)^{q}\leq \int |X(\omega_{1})|^{q}\, \mathbb{P}_{m}(\omega)<\infty. 
$$
\end{example}

\noindent
{\bf Acknowledgements:} We thank Jeroen Lamb, Konstantinos Kourilous, and Matthias Wolfrum for enlightening discussions. This research was supported in part by FAPESP Grant No. 2023/13706-0. TP thanks CNPq 312287/2021-6 and the Serrapilheira Institute (Grant No. 270 Serra-1709-16124), and warmly thanks the Weierstrass Institute for their hospitality during multiple visits. TP also thanks the Humboldt Foundation for support through a Bessel Fellowship. BF thanks FAPESP for support through Grant No. 2025/09758-0.

\section{Proofs}\label{sec:proofs}
We begin with two uniqueness results showing that suitable synchronization
properties of random orbits imply uniqueness of invariant graphs and stationary
measures.

\begin{proposition}\label{prop:atmost1}
 Let $(f,\theta,\mathbb{P})$ be a stationary RIM. Assume that for $\mathbb{P}$-almost every $\omega$, we have 
$$
 \lim_{n\to \infty}d(f_{\omega}^{n}(x), f_{\omega}^{n}(y))=0
$$
for every $x,y\in M$. Then, there is at most one invariant graph. 
\end{proposition}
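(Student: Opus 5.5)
Suppose $\pi_1,\pi_2\colon\Omega\to M$ are two invariant graphs. I will show that $\pi_1=\pi_2$ almost everywhere. The plan is to transport the synchronization along orbits and then use the invariance of $\mathbb P$ to pull the conclusion back to time zero.

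\textbf{Orbit identity.} Iterating the invariance relation $f_\omega(\pi_i(\omega))=\pi_i(\theta(\omega))$ gives $f^n_\omega(\pi_i(\omega))=\pi_i(\theta^n(\omega))$ for all $n\ge 0$. Here I use that a countable intersection of full-measure sets has full measure, and that $\theta$ preserves $\mathbb P$, so each set $\theta^{-k}(\text{full})$ has full measure. Consider the measurable function $D(\omega)=d(\pi_1(\omega),\pi_2(\omega))$; it is measurable by the separability remark in the paper. Then
\[
D(\theta^n(\omega))=d\bigl(f^n_\omega(\pi_1(\omega)),f^n_\omega(\pi_2(\omega))\bigr).
\]

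\textbf{Applying the hypothesis.} The hypothesis holds for every pair $x,y$ at a fixed good $\omega$, so I may take $x=\pi_1(\omega)$ and $y=\pi_2(\omega)$, which depend on $\omega$. This is exactly why the statement is quantified over all $x,y$ for almost every $\omega$. It follows that $D(\theta^n(\omega))\to 0$ for almost every $\omega$.

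\textbf{Conclusion via invariance.} For fixed $\varepsilon>0$, set $A=\{D>\varepsilon\}$. Almost surely $\mathbf 1_A(\theta^n\omega)\to 0$, so by dominated convergence $\mathbb P(\theta^{-n}A)\to 0$. By invariance, however, $\mathbb P(\theta^{-n}A)=\mathbb P(A)$ for every $n$. Hence $\mathbb P(A)=0$. Letting $\varepsilon\downarrow 0$ along a sequence gives $D=0$ almost everywhere.

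\textbf{Main care point.} The only delicate step is the measure-theoretic bookkeeping in the orbit identity: the iterated invariance relations must hold simultaneously on one full-measure set, and the hypothesis must be applicable at random initial points. Both are handled by countable intersections and by the "for every $x,y$" form of the assumption.
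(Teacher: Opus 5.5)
Your proof is correct and takes essentially the same route as the paper: you iterate the invariance relation to get $d(\pi_1(\theta^n\omega),\pi_2(\theta^n\omega))=d(f^n_\omega(\pi_1(\omega)),f^n_\omega(\pi_2(\omega)))\to 0$ almost everywhere, then combine $\theta$-invariance of $\mathbb P$ with dominated convergence. The only cosmetic difference is that you apply this to the indicators $\mathbf 1_{\{D>\varepsilon\}}$, whereas the paper integrates the bounded metric $\hat d=d/(1+d)$ directly.
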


\begin{proof}
Let $\pi$ and $\alpha$ be invariant graphs. We will show that $\pi=\alpha$  $\mathbb P$-almost everywhere. Consider the bounded continuous function
\begin{equation}\label{Eq:hat_metric}
\hat{d}:M \times M \to \mathbb R, \quad \hat d(x,y)=\frac{d(x,y)}{1+d(x,y)}.
\end{equation}
Since $\alpha$ and $\pi$ are invariant maps, we have 
$$
\pi(\theta^{n}(\omega))=f_{\omega}^{n}(\pi(\omega)) \quad \quad \mbox{and} \quad \quad \alpha(\theta^{n}(\omega))=f_{\omega}^{n}(\alpha(\omega)) 
$$
for $\mathbb{P}$-almost every $\omega$. Thus, by the $\theta$-invariance of $\mathbb{P}$ we conclude that 
\[
\begin{split}
\int \hat 
d(\pi(\omega), \alpha(\omega) )\, d\mathbb{P}(\omega)
=
\int \hat d(
\pi(\theta^
{n}(\omega)),\alpha(\theta^{n}(\omega))
)
\, d\mathbb{P}(\omega)
=\int \hat d(f^{n}_{\omega}(\pi(\omega)),
f^{n}_{\omega}(\alpha(\omega)))\, d\mathbb{P}(\omega).
\end{split}
\]
By hypothesis, we have
$$
\lim_{n\to \infty}d(f_{\omega}^{n}(\pi(\omega)),f_{\omega}^{n}(\alpha(\omega)))=0
$$
for $\mathbb{P}$-almost every $\omega$, which implies that
$$
\lim_{n\to \infty}\hat d(f_{\omega}^{n}(\pi(\omega)),f_{\omega}^{n}(\alpha(\omega)))=0
$$
for $\mathbb{P}$-almost every $\omega$. Since $\hat d$ is bounded, applying the dominated convergence theorem we get 
$$
\int \hat d(\pi(\omega),\alpha(\omega))\, d\mathbb{P}(\omega)=0.
$$
Hence, $d(\pi(\omega),\alpha(\omega))=0$ for $\mathbb{P}$-almost every $\omega$.
\end{proof}

\begin{proposition}\label{prop:atmost2}
Let $(f,\mathbb{P}_{\nu})$ be an i.i.d. RIM, and assume that for every  $x,y\in M$, we have 
$$ 
\lim_{n\to \infty}d(f_{\omega}^{n}(x), f_{\omega}^{n}(y))=0
$$
for $\mathbb{P}$-almost every $\omega$. Then, there is at most one stationary measure. 
\end{proposition}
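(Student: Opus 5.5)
The plan mirrors the proof of Proposition~\ref{prop:atmost1}, with the invariant graph replaced by an independent random initial point distributed according to a stationary measure. Let $\mu_1$ and $\mu_2$ be stationary measures. The aim is to show that $\int h\,d\mu_1=\int h\,d\mu_2$ for every bounded Lipschitz function $h\colon M\to\mathbb R$. Since $M$ is a metric space, such functions determine Borel probability measures, so this gives $\mu_1=\mu_2$.

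First, stationarity together with the i.i.d. structure gives
\[
\mathcal T^n\mu_i=\int (f_\omega^n)_*\mu_i\,d\mathbb P_\nu(\omega).
\]
This holds because $f^n_\omega=f_{\omega_{n-1}}\circ\cdots\circ f_{\omega_0}$ depends on coordinates that are independent with law $\nu$, so induction on $n$ applies. Hence, for every $n$,
\[
\int h\,d\mu_1-\int h\,d\mu_2=\int\!\!\int\!\!\int \bigl(h(f^n_\omega(x))-h(f^n_\omega(y))\bigr)\,d\mu_1(x)\,d\mu_2(y)\,d\mathbb P_\nu(\omega).
\]
Here we insert $\mu_2$ (respectively $\mu_1$) trivially, since it is a probability measure, and use Fubini's theorem on $\Omega\times M\times M$. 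Joint measurability of $(\omega,x)\mapsto f^n_\omega(x)$ follows from the measurability of $f$ on the product space.

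Next, we pass to the limit. Fix $(x,y)$. By hypothesis, $d(f^n_\omega(x),f^n_\omega(y))\to0$ for $\mathbb P_\nu$-a.e. $\omega$. Since $h$ is Lipschitz, the integrand tends to $0$ for a.e. $\omega$, and it is bounded by $2\|h\|_\infty$.

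The key subtlety is that the hypothesis gives, for each pair, a full-measure set of $\omega$ that depends on $(x,y)$. This is unlike Proposition~\ref{prop:atmost1}, where one set worked for all pairs. The Fubini order resolves this. For each fixed $(x,y)$, dominated convergence in $\omega$ gives
\[
g_n(x,y):=\int \bigl(h(f^n_\omega(x))-h(f^n_\omega(y))\bigr)\,d\mathbb P_\nu(\omega)\to0 .
\]
Then $|g_n|\le 2\|h\|_\infty$ and dominated convergence with respect to $\mu_1\otimes\mu_2$ gives $\int g_n\,d(\mu_1\otimes\mu_2)\to0$. The left-hand side does not depend on $n$, so it is zero.

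The main obstacle I expect is the measurability bookkeeping. We need joint measurability of $(\omega,x,y)\mapsto h(f^n_\omega(x))-h(f^n_\omega(y))$ to justify Fubini, and the identity $\mathcal T^n\mu=\int (f^n_\omega)_*\mu\,d\mathbb P_\nu$. Both should follow routinely from the product structure and the standing measurability of $f$.

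Alternatively, one could work with $\hat d$ from \eqref{Eq:hat_metric} and a coupling $(X,Y)$ of $\mu_1\otimes\mu_2$ taken independent of $(\omega_n)_{n\ge0}$. The above argument with bounded Lipschitz test functions is the cleaner route, so I would use it.
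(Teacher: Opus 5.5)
Your proof is correct, but it follows a genuinely different route from the paper's.

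\textbf{The paper's route.} It first reduces, via the ergodic decomposition, to two \emph{ergodic} stationary measures. It then uses the Birkhoff ergodic theorem for Markov chains to pick a $\mu_1$-typical point $x_1$ and a $\mu_2$-typical point $x_2$. Finally it compares the time averages $\frac1n\sum_{i<n}G(f^i_\omega(x_\ell))$ along one common noise realization $\omega$ for which $x_1$ and $x_2$ synchronize.

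\textbf{Your route.} You run the two stationary measures as the product $\mu_1\otimes\mu_2$ driven by common noise. You then need only three ingredients: the identity $\mathcal T^n\mu=\int (f^n_\omega)_*\mu\,d\mathbb P_\nu(\omega)$, Fubini, and dominated convergence applied twice. The first application is in $\omega$ for each fixed pair, which is exactly where the quantifier order ``for every $x,y$, for a.e.\ $\omega$'' enters. The second is in $(x,y)$.

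\textbf{What your route buys.}
\begin{itemize}
\item It is elementary, with no appeal to the ergodic decomposition or to an ergodic theorem.
\item It proves a stronger statement. Since $|h(a)-h(b)|\le C\,\hat d(a,b)$ with $C=\max\{2\,\mathrm{Lip}(h),4\|h\|_\infty\}$, you only need $\mathbb E\,\hat d(f^n_\omega(x),f^n_\omega(y))\to 0$. That is synchronization in probability, and only for $\mu_1\otimes\mu_2$-almost every pair.
\item It adapts directly to the synchronization-on-average remark: average $g_n$ over $n$ before applying dominated convergence.
\end{itemize}

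\textbf{What the paper's route buys.} It gives a pathwise statement: a single typical noise realization produces the same empirical averages from typical points of both measures. This fits the dynamical, orbit-based viewpoint of the rest of the paper, at the cost of heavier ergodic-theoretic machinery.
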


\begin{proof}
Assume that the set of stationary measures is non-empty. By the ergodic decomposition theorem, it is enough to prove that any two ergodic stationary measure coincide. Let $\mu_{1}$ and $\mu_{2}$ denote ergodic stationary measures. Let us show that $\mu_{1}=\mu_{2}$. 

Fix \(G:M \to \mathbb R\) a bounded Lipschitz map. By the Birkhoff ergodic theorem for Markov chains, for each $\ell=1,2$, for $\mathbb P$-almost every $\omega$ and $\mu_\ell$-almost every $x$ (typical points), we have 
\begin{equation}\label{eq:bir}
\lim_{n\to \infty} \frac{1}{n}\sum_{i=0}^{n-1} G(f_{\omega}^{i}(x))=\int G(x)\, d\mu_{\ell}(x).
\end{equation}

Let $x_{1}$ be a typical point for $\mu_{1}$ and let $x_{2}$ be a typical point for $\mu_{2}$. By hypothesis, for $\mathbb{P}$-almost every $\omega$, we have 
\begin{equation}\label{eq:sync}
\lim_{n\to \infty
}d(f^{n}_{\omega}(x_{1}),f_{\omega}^{n}(x_{2}))=0.
\end{equation}

Since on a probability space, countable intersection of measurable sets of full measure has full measure, we can take $\omega$ satisfying \eqref{eq:sync} and such that for every $\ell=1,2$,   \eqref{eq:bir} is satisfied for $x=x_{\ell}$. This implies that 
$$
\int G(x)\, d\mu_{1}(x)=\lim_{n\to \infty} \frac{1}{n}\sum_{i=0}^{n-1} G(f_{\omega}^{i}(x_{1}))=\lim_{n\to \infty} \frac{1}{n}\sum_{i=0}^{n-1} G(f_{\omega}^{i}(x_{2}))=\int G(x)\, d\mu_{2}(x).
$$
Since $G$ was an arbitrary bounded Lipschitz function, $\mu_{1}=\mu_{2}$.
\end{proof}

\begin{remark} \emph{
Under the assumptions of Proposition~\ref{prop:atmost2}, and without additional hypotheses, one cannot conclude that there is at most one invariant graph. See \cite{Kleptsyn2004} for an example of an i.i.d. random iteration of circle homeomorphisms with two invariant graphs.}   
\end{remark}

\begin{remark}[\emph{Synchronization on average}]
\emph{ Both propositions remain valid if the synchronization condition
\[
d(f_\omega^n(x),f_\omega^n(y))\to0
\]
is replaced by synchronization on average,
$$
\lim_{n\to \infty} \frac{1}{n}\sum_{i=0}^{n-1} d(f_{\omega}^{i}(x),f_{\omega}^{i}(y))=0,
$$
provided that the order of the quantifiers is preserved in each statement.}
\end{remark}

\subsection{Proof of Theorem \ref{Eltongener}}

Since the maximal Lyapunov exponent is negative, by Proposition \ref{prop:atmost1}, we need only to show that for every reference map $\varphi$, for $\mathbb P$-almost every $\omega$, the sequence
$$
\Gamma^{n}\varphi(\omega)=
f_{\theta^{-1}(\omega)} \circ \cdots \circ
f_{\theta^{-n}(\omega)}
\bigl(\varphi(\theta^{-n}(\omega))\bigr)
$$
converges and that its limit defines an invariant graph.

To prove convergence, it is enough to show that 
\begin{equation}\label{cauchy}
\sum_{n=0}^{\infty} d(\Gamma^{n}\varphi(\omega),\Gamma^{n+1}\varphi(\omega))<\infty 
\end{equation}
for $\mathbb{P}$-almost every $\omega$.
Indeed, \eqref{cauchy} implies that the sequence $(\Gamma^{n}\varphi(\omega))_{n\in \mathbb{N}}$ is a Cauchy sequence for $\mathbb{P}$-almost every $\omega$.
Set $D(\omega)=d(\varphi(\omega),\Gamma\varphi (\omega)). $ First, note that
\[
\begin{split}
d(\Gamma^{n}\varphi(\omega),\Gamma^{n+1}\varphi(\omega))
&\leq \|f_{\theta^{-1}(\omega)}\circ \dots\circ  f_{\theta^{-n}(\omega)}\| D(\theta^{-n}(\omega)).
\end{split}
\]
 
We estimate the two factors on the right-hand side separately. The first is controlled by the negative maximal Lyapunov exponent, while the second has subexponential growth by the generalized displacement condition.

The reverse-order iteration $f^{n}_{\theta^{-n}(\omega)}= f_{\theta^{-1}(\omega)} \circ \cdots\circ f_{\theta^{-n}(\omega)} $ also has a well-defined asymptotic exponential growth rate, and this rate coincides with the maximal Lyapunov exponent,
\begin{equation}\label{NExp}
\lambda(\omega)
=\lim_{n\to\infty}
\frac1n
\log\|f_{\omega}^n\|=
\lim_{n\to\infty}
\frac1n
\log\|f_{\theta^{-n}(\omega)}^n\|
\end{equation}
for $\mathbb P$-almost every $\omega$. See \cite{Elton1990} or  \cite[Lemma 5.4]{Stark1999}.
On the other hand, by hypothesis the random variable $\log ^{+}D$  is integrable, which implies that 
\begin{equation}\label{subexpodisplacement}
\lim_{n\to \infty}\frac{1}{n} \log^{+} D(\theta^{-n}(\omega))=0
\end{equation}
for $\mathbb{P}$-almost every $\omega$. This is a standard fact about general integrable functions and measure-preserving transformations, which follows from the Borel-Cantelli lemma.  
 
Fix $\omega$ satisfying \eqref{NExp} and \eqref{subexpodisplacement}. Since $\lambda(\omega)$ is negative, choose $\kappa(\omega)>0$ such that 
\[
\kappa(\omega)+\lambda(\omega)<0,
\] 
if \(-\infty<\lambda(\omega)\), otherwise choose any \(\kappa(\omega)>0\). Set
\[
\varepsilon(\omega) = \begin{cases} 
\kappa(\omega)
&\qquad\text{if} \quad \lambda(\omega)=-\infty\\
-(\kappa(\omega)+\lambda(\omega)) &\qquad\text{otherwise}
\end{cases}
\]

Then, there exists $n_{0} = n_{0}(\omega,\varepsilon)$ for which
\begin{equation}\label{negativeback}
\|f_{\theta^{-n}(\omega)}^{n}\|
\leq e^{-n\,\varepsilon(\omega)}
\end{equation}
holds for all $n \geq n_{0}$. Choose $\gamma>0$ so that $-\epsilon(\omega)+\gamma<0$. By \eqref{subexpodisplacement}, there exists $m_0$ such that 
$$
n\geq m_{0} \implies \frac{1}{n}\log^{+} D(\theta^{-n}(\omega))<\gamma.
$$
In particular, 
$$
n\geq m_{0} \implies  D(\theta^{-n}(\omega))<e^{n\gamma}.
$$
Therefore, for $n\geq \max\{n_{0},m_{0}\}$ we have  
$$
d(\Gamma^{n}\varphi(\omega),\Gamma^{n+1}\varphi(\omega))\leq e^{-n\epsilon(\omega)}e^{n\gamma}=e^{n(-\epsilon(\omega)+\gamma)}=e^{nu}
$$
where $u=-\epsilon(\omega)+\gamma$. Since $u<0$, it follows that
$$
\sum_{n=0}^{\infty}d(\Gamma^{n}\varphi(\omega),\Gamma^{n+1}\varphi(\omega))<\infty.
$$

Let  $\pi\colon \Omega\to M$ be a measurable map such that 
\begin{equation}\label{refgraph}
\pi(\omega)=\lim_{n\to \infty} \Gamma^{n}\varphi(\omega)
\end{equation}
for $\mathbb{P}$-almost every $\omega$. By the continuity of $f_\omega$, we have 
\[
\begin{split}
f_{\omega}(\pi(\omega))&=f_{\omega}(\lim_{n\to\infty}
f_{\theta^{-1}(\omega)} \circ \cdots \circ
f_{\theta^{-n}(\omega)}
(\varphi(\theta^{-n}(\omega))))\\
&=\lim_{n\to \infty}f_{\omega}\circ f_{\theta^{-1}(\omega)}\circ \dots \circ f_{\theta^{-n}(\omega)}(\varphi(\theta^{-n}(\omega)))
=\pi(\theta(\omega)).
\end{split}
\]
This proves that \(\pi\) is an invariant graph. Moreover, by Proposition~\ref{prop:atmost1}, the invariant graph is unique.
Consequently, the limit above is independent of the chosen reference map
$\varphi$.

It remains to prove convergence in law. Fix a reference map $\varphi$ and let
$G\colon M\to\mathbb R$ be bounded and continuous. Since $\mathbb{P}$ is $\theta$-invariant and $\varphi$ is a reference map, we have
 \[
 \begin{split}
\lim_{n\to \infty} 
\int G(f_{\omega}^{n}(\varphi(\omega)))
\, d\mathbb{P}(\omega)
=\lim_{n\to \infty}\int G(\Gamma^{n}\varphi(\omega))\, d\mathbb{P}(\omega)
=\int G(x)\, d\pi_{*}\mathbb{P}(x).
 \end{split}
\]

This completes the proof of the theorem.

\hfill $\square$

\subsection{Proof of Corollary~\ref{coro:letac1}}

We use the following proposition, which may be viewed as a generalization of the main result in \cite{Letac1986}.

\begin{proposition}[Generalized Letac principle]\label{prop:genletac}
Consider an i.i.d. RIM $(f,\mathbb{P}_{\nu})$ on a metric space $M$. Let $\psi\colon \Omega\to M$ be an $\mathcal{F}_{-\infty}$-measurable map and set $\mu=\psi_{*}\mathbb{P}_{\nu}$. Then
\begin{itemize}
\item[(i)] The law of the random element $\omega\mapsto f_{\omega}^{n}(\psi(\omega))$ is given by $\mathcal{T}^{n}\mu$.
\item[(ii)] Assuming that the limit 
$$
\rho(\omega)=\lim_{n\to \infty} \Gamma^{n} \psi(\omega) 
$$
exists for $\mathbb{P}_{\nu}$-almost every $\omega$, we have 
$$
\mathcal{T}^{n}(\psi_{*}\mathbb{P}_{\nu})\to \rho_{*}\mathbb{P}_{\nu}
$$
where the convergence is in the weak$^{*}$-topology. 
\end{itemize}

\end{proposition}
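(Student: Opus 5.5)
For (i), I will argue by induction on $n$, using independence of the coordinates under $\mathbb P_\nu=\nu^{\mathbb Z}$. The case $n=0$ is the definition of $\mu$. Suppose the law of $X_n(\omega)=f^n_\omega(\psi(\omega))$ is $\mathcal T^n\mu$.

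The key observation is a measurability one. Since $\psi$ is $\mathcal F_{-\infty}$-measurable, i.e. depends only on the coordinates $\omega_{-k}$ with $k\geq1$, and $f^n_\omega=f_{\omega_{n-1}}\circ\cdots\circ f_{\omega_0}$, the random element $X_n$ depends only on the coordinates $\omega_j$ with $j\leq n-1$. Hence $X_n$ is independent of $\omega_n$, which has law $\nu$.

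Since $X_{n+1}(\omega)=f_{\omega_n}(X_n(\omega))$, Fubini gives, for bounded measurable $G$,
\[
\int G(X_{n+1})\,d\mathbb P_\nu=\int\!\!\int G(f_\alpha(x))\,d\mathcal T^n\mu(x)\,d\nu(\alpha)=\int G\,d\mathcal T^{n+1}\mu .
\]
This closes the induction. The joint measurability of $(\alpha,x)\mapsto f_\alpha(x)$ is assumed in the setup, so Fubini applies.

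For (ii), I will compare $X_n$ with $\Gamma^n\psi$ via shift invariance. Directly from the definitions, $\Gamma^n\psi(\omega)=f^n_{\sigma^{-n}\omega}(\psi(\sigma^{-n}\omega))=X_n(\sigma^{-n}\omega)$. Since $\mathbb P_\nu$ is $\sigma$-invariant, $\Gamma^n\psi$ and $X_n$ have the same law, which is $\mathcal T^n\mu$ by (i).

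By hypothesis $\Gamma^n\psi\to\rho$ almost surely. Almost sure convergence implies convergence in law: for bounded continuous $G$, dominated convergence gives $\int G(\Gamma^n\psi)\,d\mathbb P_\nu\to\int G(\rho)\,d\mathbb P_\nu$. Hence $\mathcal T^n\mu\to\rho_*\mathbb P_\nu$ weak-$*$.

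I do not expect a serious obstacle. The only delicate point is the independence claim in (i): $X_n$ must be a measurable function of $(\omega_j)_{j\le n-1}$ only. This follows because $\psi$ is measurable with respect to the past $\sigma$-algebra and $f_\omega$ depends only on $\omega_0$. With that in place, Fubini applies on the product $E^{(-\infty,n-1]}\times E^{\{n\}}$.

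Measurability of $\rho$ also needs a remark. It can be defined as the limit on the full-measure convergence set, extended by a constant elsewhere, as was done for $\pi$ in the proof of Theorem~\ref{Eltongener}.
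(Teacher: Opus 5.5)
Your proof is correct. Part (ii) matches the paper's argument: the identity $\Gamma^n\psi=X_n\circ\sigma^{-n}$, then $\sigma$-invariance, then dominated convergence.

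For part (i) you take a genuinely different route. The paper does not induct. It first proves Lemma~\ref{lemmaletac}, the ``freezing'' identity $\mathbb E\bigl(g(\cdot,\psi(\cdot))\mid\mathcal F_{-\infty}\bigr)(\omega)=\mathbb E\bigl(g(\cdot,\psi(\omega))\bigr)$ for bounded $\mathcal F_{+\infty}\otimes\mathcal B(M)$-measurable $g$. This is obtained by a monotone class argument on rectangles $A^+\times B$. The paper then applies the lemma once to $g_n(\omega,x)=G(f^n_\omega(x))$, which separates all $n$ steps at once into past and future.

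You instead peel off one coordinate at a time. You use only that $X_n$ is $\sigma(\omega_j\colon j\le n-1)$-measurable, hence independent of $\omega_n$, together with Fubini.

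Your route is more elementary and more self-contained. The paper's last step, $\int G\,d\mathcal T^n(\psi_*\mathbb P_\nu)=\int\mathbb E\bigl(G(f^n_{(\cdot)}(\psi(\omega)))\bigr)\,d\mathbb P_\nu(\omega)$, tacitly uses two facts: that $\mathcal T^n\delta_x$ is the law of $f^n_\omega(x)$, and that $\mathcal T^n$ commutes with mixtures. Both require an induction of exactly your type, and your induction proves them along the way (take $\psi$ constant).

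The paper's route gives something more general in exchange. Its conditional-expectation identity holds for arbitrary bounded future-measurable functionals $g$, not only those of the form $G\circ f^n$. That makes it a more flexible tool for decoupling past and future in other arguments.
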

\begin{proof}

Given an integrable random variable $X\colon \Omega\to \mathbb{R}$ and a $\sigma$-algebra $\mathcal{G}$, we shall denote by $\mathbb{E}(X|\mathcal{G})$ the conditional expectation of $X$ with respect to $\mathcal{G}$. As usual $\mathbb{E}(X)$ denotes the expectation of $X$.  To prove item (i), we need the following lemma.

\begin{lemma}\label{lemmaletac}
Let $\psi\colon\Omega\to M$ be an $\mathcal F_{-\infty}$-measurable map, and
let
\[
g\colon\Omega\times M\to\mathbb R
\]
be a bounded $\mathcal F_{+\infty}\otimes\mathcal B(M)$-measurable function.
Then
\[
\mathbb E\bigl(g(\cdot,\psi(\cdot))\mid\mathcal F_{-\infty}\bigr)(\omega)
=
\mathbb E\bigl(g(\cdot,\psi(\omega))\bigr)
\]
for $\mathbb P$-almost every $\omega$.
\end{lemma}

\begin{proof}
Let $\mathcal H$ be the class of bounded
$\mathcal F_{+\infty}\otimes\mathcal B(M)$-measurable functions
$g\colon\Omega\times M\to\mathbb R$ such that
\[
\mathbb E\bigl(g(\cdot,\psi(\cdot))\mid\mathcal F_{-\infty}\bigr)(\omega)
=
\mathbb E\bigl(g(\cdot,\psi(\omega))\bigr)
\]
for $\mathbb P$-almost every $\omega$. We will show that $\mathcal H$ contains all bounded $\mathcal F_{+\infty}\otimes\mathcal B(M)$-measurable functions.

First, observe that $\mathcal H$ is a monotone vector class. Indeed, if $g,h\in\mathcal H$ and $c\in\mathbb R$, then $g+h\in\mathcal H$ and $cg\in\mathcal H$, by linearity of expectation and conditional expectation. Moreover, if $g_n\in\mathcal H$ are nonnegative and increase pointwise to a bounded function $g$, then $g\in\mathcal H$, by the monotone convergence theorem applied both to expectations and conditional expectations.

Let $\mathcal C$ be the class of rectangles
\[
A^+\times B,
\qquad
A^+\in\mathcal F_{+\infty},\quad B\in\mathcal B(M).
\]
This is a $\pi$-system generating $\mathcal F_{+\infty}\otimes\mathcal B(M)$. By the monotone class theorem, it is enough to prove that $\mathds 1_{A^+\times B}\in\mathcal H$ for every $A^+\in\mathcal F_{+\infty}$ and every $B\in\mathcal B(M)$. Fix such a rectangle and set
\[
g=\mathds 1_{A^+\times B}.
\]
Let $A^-\in\mathcal F_{-\infty}$. Since $\psi$ is $\mathcal F_{-\infty}$-measurable,  $\mathds 1_B(\psi(\omega))$ is $\mathcal F_{-\infty}$-measurable. Hence
\[
\mathds 1_{A^-}(\omega)\mathds 1_B(\psi(\omega))
\]
is $\mathcal F_{-\infty}$-measurable. On the other hand, $\mathds 1_{A^+}$ is $\mathcal F_{+\infty}$-measurable. Since $\mathcal F_{-\infty}$ and $\mathcal F_{+\infty}$ are independent, we obtain
\[
\begin{aligned}
\int_{A^-} g(\omega,\psi(\omega))\,d\mathbb P(\omega)
=
\int
\mathds 1_{A^-}(\omega)
\mathds 1_{A^+}(\omega)
\mathds 1_B(\psi(\omega))\,d\mathbb P(\omega)
=
\mathbb P(A^+)
\int_{A^-}
\mathds 1_B(\psi(\omega))\,d\mathbb P(\omega).
\end{aligned}
\]
Moreover, for every $\omega$,
\[
\mathbb E\bigl(g(\cdot,\psi(\omega))\bigr)
=
\int
\mathds 1_{A^+}(\vartheta)
\mathds 1_B(\psi(\omega))\,d\mathbb P(\vartheta)
=
\mathbb P(A^+)\mathds 1_B(\psi(\omega)).
\]
Therefore,
\[
\begin{aligned}
\int_{A^-}
\mathbb E\bigl(g(\cdot,\psi(\omega))\bigr)\,d\mathbb P(\omega)
=
\mathbb P(A^+)
\int_{A^-}
\mathds 1_B(\psi(\omega))\,d\mathbb P(\omega)
=
\int_{A^-} g(\omega,\psi(\omega))\,d\mathbb P(\omega).
\end{aligned}
\]
Since this holds for every $A^-\in\mathcal F_{-\infty}$, we conclude that
\[
\mathbb E\bigl(g(\cdot,\psi(\cdot))\mid\mathcal F_{-\infty}\bigr)(\omega)
=
\mathbb E\bigl(g(\cdot,\psi(\omega))\bigr)
\]
for $\mathbb P$-almost every $\omega$. Thus $\mathds 1_{A^+\times B}\in\mathcal H$, and the monotone class theorem completes the proof.
\end{proof}

We first prove item~(i). For every $n$, we define $g_{n}\colon \Omega\times M\to \mathbb{R}$ by 
$$
g_{n}(\omega,x)=G(f_{\omega}^{n}(x)).
$$
This function is a bounded $\mathcal{F}_{+\infty}\otimes \mathcal{B}(M)$- measurable  function. From the lemma we obtain
$$
\mathbb{E}(g_{n}(\cdot,\psi(\cdot))|\mathcal{F}_{-\infty})(\omega)=\mathbb{E}(g_{n}(\cdot, \psi(\omega))).
$$
In particular, since $\Omega\in \mathcal{F}_{-\infty}$, we have 
$$
\int g_{n}(\omega,\psi(\omega))\, d\mathbb{P}(\omega)=\int \mathbb{E}(g_{n}(\cdot, \psi(\omega)))\, d\mathbb{P}(\omega)
$$
or equivalently:
$$
\int G(f_{\omega}^{n}(\psi(\omega)))\, d\mathbb{P}(\omega)=\int \mathbb{E}(G(f^{n}_{(\cdot)}( \psi(\omega))))\, d\mathbb{P}(\omega).
$$
It follows from the Lemma \ref{lemmaletac} that
\[
\begin{split}
\int G(x)\, d\mathcal{T}^{n}(\psi_{*}\mathbb{P})(x)
=\int \mathbb{E}(G(f_{(\cdot)}^{n}(\psi(\omega))))\, d\mathbb{P}(\omega)
=\int G(f_{\omega}^{n}(\psi(\omega)))\, d\mathbb{P}(\omega).
\end{split}
\]
This completes the proof of item (i). 
We next prove item~(ii). Let $G\colon M\to \mathbb{R}$ be a continuous bounded function. Note that
$$
\int G(f_{\omega}^{n}(\psi(\omega)))\, d\mathbb{P}(\omega)=\int G(\Gamma^{n}\psi(\omega))\, d\mathbb{P}(\omega).
$$
It follows from item~(i)  that 
\[
\lim_{n\to\infty}
\int G(x)\,d\mathcal T^n(\psi_*\mathbb P)(x)
=
\int G(x)\,d\rho_*\mathbb P(x).
\]
\end{proof}

To conclude the proof of the corollary, it remains to establish the existence of a stationary measure. Once this is done, Corollary~\ref{coro:letac1} follows from Proposition~\ref{prop:genletac}, Theorem~\ref{Eltongener}, and Proposition~\ref{prop:atmost2}. Since $\mathcal T$ is continuous, one could try to apply item~(ii) of Proposition~\ref{prop:genletac} directly. However, our assumptions guarantee only the existence of a reference map, which need not be $\mathcal F_{-\infty}$-measurable. We therefore prove directly that the invariant map $\pi$ is $\mathcal F_{-\infty}$-measurable. Then, stationarity of $\pi_{*}\mathbb{P}$ will follow from the $\theta$-invariance of  $\mathbb{P}$, the invariance equation
 $$
f_{\omega}(\pi(\omega))=\pi(\sigma(\omega))\quad \quad \mbox{a.e.}
$$
and item $(i)$ of Proposition \ref{prop:genletac}.

Consider the function 
$$
\hat d\colon M\times M\to [0,1], \quad \hat d(x,y)=\frac{d(x,y)}{1+d(x,y)}.
$$
Fix $p\in M$ and note that the negative maximal Lyapunov exponent and the invariance of the map $\pi$ imply that 
$$
d(\pi(\sigma^{n}(\omega)),f_{\omega}^{n}(p))=d(f_{\omega}^{n}(\pi(\omega)),f_{\omega}^{n}(p))\to 0.
$$
In particular, 
$$
\hat d(\pi(\sigma^{n}(\omega)),f_{\omega}^{n}(p))\to 0
$$
Now, by the dominated convergence theorem and the $\theta$-invariance of $\mathbb{P}$ we have 
$$
\int \hat d(\pi(\sigma^{n}(\omega)),f_{\omega}^{n}(p))
\, d\mathbb P(\omega)
=\int \hat d(\pi(\omega),f_{\omega_{-1}}\circ \dots \circ f_{\omega_{-n}}(p))\, d\mathbb{P}(\omega)\to 0
$$
Then, we can extract a subsequence $(n_{k})$ such that 
$$
\hat d(\pi(\omega),f_{\omega_{-1}}\circ \dots \circ f_{\omega_{-n_{k}}}(p))\to 0
$$
for $\mathbb{P}$-almost every $\omega$,
which implies that 
$$
\pi(\omega)=\lim_{k\to \infty}  f_{\omega_{-1}}\circ \dots \circ f_{\omega_{-n_{k}}}(p).
$$
This proves that $\pi$ is $\mathcal{F}_{-\infty}$-measurable.

\subsection{Proof of Theorem \ref{tb}}
Let $\mathcal{T}$ be the Markov operator associated with an i.i.d. RIM $(f,\mathbb{P}_{\nu})$ that is $r$-contracting on $L^{q}$-average between points. For every $\eta\in\mathcal{P}_{1}(M)$, we denote by $\mathcal{P}^{q}_{\eta}$ the finite-distance component of $\eta$ with respect to the extended Wasserstein metric $W_q$. Namely,
\[
\mathcal{P}^{q}_{\eta}
=
\left\{
\mu\in\mathcal{P}(M)
\colon
W_{q}(\mu,\eta)<\infty
\right\}.
\]
For every $\eta$ satisfying $W_{q}(\eta,\mathcal{T}\eta)<\infty$ we will prove that $\mathcal{T}$ is a contraction on the space $\mathcal{P}^{q}_{\eta}$.

We start by proving that for every $\mu_{1},\mu_{2}\in \mathcal{P}_{1}(M)$, we have
\[
W_{q}(\mathcal{T}\mu_{1},\mathcal{T}\mu_{2})
\leq r^{\min\{1,1/q\}} W_{q}(\mu_{1},\mu_{2}). 
\]

Given probability measures $\mu_{1}$ and $\mu_{2}$, let $\Upsilon$ be a transport plan between $\mu_{1}$ and $\mu_{2}$.  Recall that $\Omega=E^{\mathbb{Z}}$. For every $\alpha\in E$,  let $\hat f_{\alpha}\colon M\times M\to M\times M$ be the measurable map defined by 
\[
\hat f_{\alpha}(x,y)=(f_{\alpha}(x),f_{\alpha}(y)).
\]
We claim that  the Borel probability measure on $M\times M$ given by
$$
\mathbf{T}\Upsilon=\int \hat f_{\alpha*}\Upsilon\, d\nu(\alpha)
$$
is a transport plan between $\mathcal{T}\mu_{1}$ and $\mathcal{T}\mu_{2}$. In fact, for every Borel measurable set $A$ we have
\[
\begin{split}
	\mathbf{T}\Upsilon( A\times M)
	&=\int \hat f_{\alpha*}\Upsilon(A\times M)d\nu(\alpha)
	=\int \Upsilon (f_{\alpha}^{-1}(A)\times M)d\nu(\alpha)\\
	&=\int \mu_{1}(f_{\alpha}^{-1}(A))\, d\nu(\alpha)
	=\mathcal{T}\mu_1(A).
\end{split}
\]

In the same way, we can prove that $\mathbf{T}{\Upsilon}(M\times B)=\mathcal{T}\mu_{2}(B)$ for every Borel measurable set $B$.

We need the following lemma.
\begin{lemma}
For every Borel measurable map $g\colon M\times M\to [0,\infty]$ we have 
$$
\int g(x,y)\, d\mathbf{T}\Upsilon(x,y)=\int \int g(\hat f_{\alpha}(x,y))\, d\nu(\alpha)\, d\Upsilon(x,y)
$$
\end{lemma}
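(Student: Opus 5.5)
The argument is the usual bootstrap from indicator functions to nonnegative measurable functions, driven by the defining formula of $\mathbf{T}\Upsilon$.

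\textbf{Step 1: indicators.} I begin with $g=\mathds 1_C$ for a Borel set $C\subset M\times M$. By the definition of $\mathbf{T}\Upsilon$,
\[
\int \mathds 1_C\, d\mathbf{T}\Upsilon=\int \hat f_{\alpha*}\Upsilon(C)\,d\nu(\alpha)=\int\int \mathds 1_C(\hat f_\alpha(x,y))\,d\Upsilon(x,y)\,d\nu(\alpha).
\]
The target formula has the integrals in the order $d\nu\,d\Upsilon$. To swap them I use Tonelli's theorem on $E\times(M\times M)$ with the product measure $\nu\otimes\Upsilon$. This requires the map $(\alpha,x,y)\mapsto \hat f_\alpha(x,y)$ to be jointly measurable. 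That holds because $f\colon E\times M\to M$ is measurable, and because separability of $M$ gives $\mathcal B(M\times M)=\mathcal B(M)\otimes\mathcal B(M)$, so $(\alpha,x,y)\mapsto(f(\alpha,x),f(\alpha,y))$ is measurable into $M\times M$.

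The same joint measurability shows that $\alpha\mapsto \hat f_{\alpha*}\Upsilon(C)$ is measurable, so $\mathbf{T}\Upsilon$ is a well-defined measure. Since the integrand $\mathds 1_C\circ\hat f_\alpha$ is nonnegative, Tonelli applies and gives the identity for indicators.

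\textbf{Step 2: simple functions.} Both sides of the identity are linear in $g$ with nonnegative coefficients. Hence the identity extends to nonnegative simple functions.

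\textbf{Step 3: general nonnegative $g$.} Given a Borel map $g\colon M\times M\to[0,\infty]$, I choose simple functions $0\le g_k\uparrow g$. On the left side, monotone convergence for $\mathbf{T}\Upsilon$ gives convergence to $\int g\,d\mathbf{T}\Upsilon$. On the right side, $g_k\circ\hat f_\alpha\uparrow g\circ\hat f_\alpha$ pointwise, and I apply monotone convergence twice: first inside the $\nu$-integral, then in the outer $\Upsilon$-integral. Tonelli also guarantees measurability of the inner integrals, so the limits match. Since everything is nonnegative, the value $+\infty$ causes no difficulty.

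\textbf{Main obstacle.} The only point needing care is the joint measurability in Step 1, which is exactly where the separability of $M$ enters. Everything after that is routine.
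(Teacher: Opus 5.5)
Your proof is correct and follows essentially the same route as the paper: you verify the identity on indicators via Fubini--Tonelli and then extend it to all nonnegative Borel $g$. The paper checks only rectangles $A\times B$ and invokes the monotone class theorem, whereas you check every Borel set $C$ directly (the defining formula of $\mathbf{T}\Upsilon$ holds for all $C$), extend by simple functions and monotone convergence, and also make explicit the joint measurability of $(\alpha,x,y)\mapsto\hat f_\alpha(x,y)$ that the paper's Fubini step uses tacitly.
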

\begin{proof}
Let $g=\mathds{1}_{A\times B}$ be the characteristic function of a measurable rectangle $A\times B \subset M\times M$. By Fubini's theorem, we have
\[ 
\begin{split}
\int\int \mathds{1}_{A\times B}(\hat{f}_{\alpha}(x,y))\, d\nu(\alpha)\, d\Upsilon(x,y)
&= 
\int\int \mathds{1}_{A\times B}(\hat{f}_{\alpha}(x,y))\,d\Upsilon(x,y) d\nu(\alpha)\\
&=
\int \Upsilon(\hat{f}_{\alpha}^{-1}(A\times B))\, d\nu(\alpha)=\int \hat{f}_{\alpha*}\Upsilon(A\times B)\, d\nu(\alpha)
\\
&=\mathbf{T}\Upsilon(A\times B).
\end{split}
\]

The result follows from the Monotone Class Theorem. 
\end{proof}

Now, using the RIM $(f,\sigma,\mathbb{P}_{\nu})$ is a $r$-contracting on $L^{q}$-average and by the previous lemma, we have
\[
\begin{split}
\inf_{\bar \Upsilon\in\Gamma(\mathcal{T}\mu_{1},\mathcal{T}\mu_{2})}
\int d(x,y)^{q}\,d\bar\Upsilon(x,y)&\leq \int d(x,y)^{q}d\mathbf{T}\Upsilon(x,y)\\
	&=\int\left(\int d(f_{\alpha}(x),f_{\alpha}(y))^q\, d\nu(\alpha)\right)\, d\Upsilon(x,y) \\
	&\leq r \int  d(x,y)^{q}\, d\Upsilon(x,y).
\end{split}
\]

Since $\Upsilon$ is an arbitrary transport plan between $\mu_{1}$ and $\mu_{2}$, we conclude that 
$$
\inf_{\bar \Upsilon\in\Gamma(\mathcal{T}\mu_{1},\mathcal{T}\mu_{2})}
\int d(x,y)^{q}\,d\bar\Upsilon(x,y)\leq r \inf_{\Upsilon\in \Gamma(\mu_{1},\mu_{2})} \int d(x,y)^{q}\,d\Upsilon(x,y)
$$
and hence 
$$
W_q(\mathcal{T}\mu, \mathcal{T}\nu) \leq r^{\min\{1,\frac{1}{q}\}} \, W_q(\mu, \nu).
$$

We claim that $\mathcal{P}^{q}_{\eta}$ is invariant under the Markov operator. Let \(\mu \in \mathcal{P}^q_\eta\). We have
\[
W_q(\mathcal{T}\mu, \eta) 
\leq W_q(\mathcal{T}\mu, \mathcal{T}\eta) + W_q(\mathcal{T}\eta, \eta) 
\leq r^{\min\{1,1/q\}} W_q(\mu, \eta) + W_q(\mathcal{T}\eta, \eta) < \infty.
\]

To establish the existence of the stationary measure in Theorem~\ref{tb}, we need the following proposition. Its proof follows the same argument as in the case where $\eta$ is a Dirac measure, as presented in~\cite{Villani2009}, and is therefore omitted here.

\begin{proposition}
Let \(M\) be a Polish space, then the metric space $(\mathcal{P}_{\eta}^{q},W_{q})$ is complete.
\end{proposition}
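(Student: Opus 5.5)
The plan is to reduce the completeness of $(\mathcal{P}^{q}_{\eta},W_{q})$ to the classical completeness of the Wasserstein space $(\mathcal{P}_q(M),W_q)$ of measures with finite $q$-th moment, as in \cite{Villani2009}. We work directly with Cauchy sequences, using couplings to transfer the problem to a fixed reference measure.

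First, the base point is irrelevant. By the triangle inequality for the extended metric, $\mathcal{P}^{q}_{\eta}=\mathcal{P}^{q}_{\mu_0}$ for any $\mu_0\in\mathcal{P}^{q}_{\eta}$. Given a $W_q$-Cauchy sequence $(\mu_n)$ in $\mathcal{P}^{q}_{\eta}$, pass to a subsequence with $W_q(\mu_{n},\mu_{n+1})\le 2^{-n}$. It suffices to show that this subsequence converges in $W_q$ to some $\mu\in\mathcal{P}^{q}_{\eta}$.

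\textbf{Step 1 (tightness and a weak limit).} Choose optimal couplings $\Upsilon_n\in\Gamma(\mu_n,\mu_{n+1})$; they exist because $d^q$ is lower semicontinuous and $\Gamma(\mu_n,\mu_{n+1})$ is tight, hence compact in $M$ Polish. Glue them via the gluing lemma to obtain random variables $X_n$ on one probability space with $X_n\sim\mu_n$ and $\mathbb{E}\,d(X_n,X_{n+1})^q\le 2^{-nq'}$, where $q'=\max\{1,q\}$.

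For $q\ge1$, Minkowski's inequality gives $\sum_n\|d(X_n,X_{n+1})\|_{L^q}<\infty$. For $q<1$, the function $d^q$ is itself a metric, and $\sum_n\mathbb{E}\,d(X_n,X_{n+1})^q<\infty$. In either case $\sum_n d(X_n,X_{n+1})<\infty$ almost surely. Since $M$ is complete, $X_n\to X$ almost surely; set $\mu=\mathrm{Law}(X)$. This avoids needing tightness separately.

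\textbf{Step 2 (convergence in $W_q$).} By Fatou's lemma and the triangle inequality in $L^q$ (or in $d^q$ when $q<1$),
\[
W_q(\mu_n,\mu)^{q'}\le \mathbb{E}\,d(X_n,X)^q\le \liminf_{m\to\infty}\mathbb{E}\,d(X_n,X_m)^q,
\]
and the right-hand side is bounded by a tail sum that tends to $0$. Hence $W_q(\mu_n,\mu)\to0$. In particular $W_q(\mu,\eta)\le W_q(\mu,\mu_1)+W_q(\mu_1,\eta)<\infty$, so $\mu\in\mathcal{P}^{q}_{\eta}$. A Cauchy sequence with a convergent subsequence converges, which finishes the proof.

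\textbf{Main obstacle.} The delicate point is Step 1: existence of optimal (or $2^{-n}$-optimal) couplings when the cost is finite but the measures lack moments. I would avoid optimality entirely by taking couplings $\Upsilon_n$ whose cost is within $2^{-n}$ of the infimum; these exist by definition of the infimum. The gluing lemma needs only that $M$ is Polish, so that disintegrations exist. Everything else is the standard Villani argument with the Dirac base point replaced by $\eta$.
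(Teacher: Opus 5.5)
Your proof is correct, but it takes a different route from the one the paper invokes. The paper omits the proof and defers to Villani's argument for the Dirac base point \cite{Villani2009}. There one shows that a $W_q$-Cauchy sequence is tight, extracts a weakly convergent subsequence by Prokhorov's theorem, and then uses lower semicontinuity of $W_q$ under weak convergence to upgrade this to $W_q$-convergence. The limit belongs to $\mathcal{P}^{q}_{\eta}$ because it is at finite distance from the sequence.

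You instead realize the sped-up subsequence on a single probability space by gluing countably many couplings (a Markov chain built from disintegrations, via Ionescu--Tulcea). You then get almost sure convergence from summability and the completeness of $M$, and finish with Fatou's lemma.

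Your route buys several things:
\begin{itemize}
\item It dispenses with tightness, Prokhorov, and lower semicontinuity of the transport cost.
\item It handles $q<1$ and $q\geq 1$ uniformly. This matters here: the tightness step in Villani's proof is phrased through $W_1\leq W_p$ for $p\geq 1$. That comparison is unavailable for $q<1$ or for measures without first moments, so ``the same argument'' actually needs $d$ replaced by a bounded metric such as $\min\{d,1\}$ in that step.
\end{itemize}
The price is the infinite gluing construction, which relies on disintegration and hence on $M$ being Polish.

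One minor remark: existence of optimal couplings is not the delicate point you suggest. For the nonnegative continuous cost $d^q$ on a Polish space, the infimum over the compact set $\Gamma(\mu_n,\mu_{n+1})$ is attained with no moment assumptions. Your fallback with $2^{-n}$-optimal couplings works anyway, since the bound $2^{-nq'}+2^{-n}$ remains summable (after taking $q$-th roots when $q\geq 1$).
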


Now, the existence of stationary measure follows from the Banach fixed point theorem. Notice  that the Banach fixed point theorem  gives existence and uniqueness only within the $W_q$-finite component of $\eta$. A priori, this does not exclude the existence of stationary measures outside the set $\mathcal{P}_{\eta}^{q}$.  Let us prove the uniqueness. To this end, we will apply  Proposition \ref{prop:atmost2}.

To do that, we need to show that every i.i.d. RIM that is  $r$-contracting on $L^{q}$-average between points  synchronizes.  This is proved in the next proposition. 
 
\begin{proposition}
Let  $(f,\mathbb{P}_{\nu})$  be an i.i.d. RIM that is $r$-contracting on $L^{q}$-average between points. Then, there is  $\lambda\in (0,1)$ such that for every $x, y\in M$,  there is a measurable map $c_{x,y}\colon \Omega\to [0,\infty)$  satisfying 
$$
d(f_\omega^n(x),f_\omega^n(y) )\leq c_{x,y}(\omega)\lambda^{n}d(x,y), \quad \quad \quad n\geq 1
$$
for $\mathbb{P}$-almost every $\omega$.
\end{proposition}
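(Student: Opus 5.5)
We first iterate the contraction on $L^q$-average along orbits and then pass to an almost-sure statement via Markov's inequality and Borel--Cantelli. Fix $x,y\in M$ and set $D_n(\omega)=d(f_\omega^n(x),f_\omega^n(y))$. Since $f_\omega^{n}=f_{\omega_{n-1}}\circ f^{n-1}_\omega$, the map $f_{\omega_{n-1}}$ depends only on the coordinate $\omega_{n-1}$. The pair $(f^{n-1}_\omega(x),f^{n-1}_\omega(y))$ depends only on $\omega_0,\dots,\omega_{n-2}$. By independence of coordinates under $\mathbb P_\nu$ and Fubini,
\[
\mathbb E\bigl(D_n^q\bigr)=\int\int d\bigl(f_\alpha(f^{n-1}_\omega x),f_\alpha(f^{n-1}_\omega y)\bigr)^q\,d\nu(\alpha)\,d\mathbb P_\nu(\omega)\le r\,\mathbb E\bigl(D_{n-1}^q\bigr).
\]
Induction then gives $\mathbb E(D_n^q)\le r^n d(x,y)^q$. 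This is the same conditioning argument as in the coupling lemma above: it amounts to integrating the contraction inequality against the law of $(f^{n-1}_\omega x,f^{n-1}_\omega y)$.

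Next we choose $\lambda\in(r^{1/q},1)$, which depends only on $r$ and $q$, not on $x,y$. If $x=y$ there is nothing to prove, so take $c_{x,y}=0$. Otherwise, Markov's inequality gives
\[
\mathbb P_\nu\bigl(D_n>\lambda^n d(x,y)\bigr)\le \frac{\mathbb E(D_n^q)}{\lambda^{nq}d(x,y)^q}\le \Bigl(\frac{r}{\lambda^q}\Bigr)^n,
\]
which is summable since $r<\lambda^q$. By Borel--Cantelli, for $\mathbb P_\nu$-almost every $\omega$ there is $N(\omega)$ with $D_n(\omega)\le\lambda^n d(x,y)$ for all $n\ge N(\omega)$. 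We then define
\[
c_{x,y}(\omega)=\sup_{n\ge1}\frac{D_n(\omega)}{\lambda^n d(x,y)},
\]
which is a countable supremum of measurable functions and hence measurable. It is finite almost surely, because all terms with $n\ge N(\omega)$ are at most $1$ and finitely many terms remain, each finite. On the null set where the supremum is infinite we redefine $c_{x,y}=0$, so that $c_{x,y}$ takes values in $[0,\infty)$. The inequality $D_n\le c_{x,y}\lambda^n d(x,y)$ then holds almost surely for every $n\ge1$.

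The main point requiring care is the first step: justifying $\mathbb E(D_n^q)\le r\,\mathbb E(D_{n-1}^q)$ rigorously, including measurability of $\omega\mapsto D_n(\omega)$ and the independence factorization. Measurability follows from the joint measurability of $f$ and the separability of $M$. The factorization is handled by writing $\mathbb P_\nu$ as the product of the law of $(\omega_0,\dots,\omega_{n-2})$ and $\nu$ on $\omega_{n-1}$, and then applying Tonelli to the nonnegative integrand. Everything else is routine. As a consequence, the synchronization hypothesis of Proposition~\ref{prop:atmost2} holds, which yields uniqueness of the stationary measure in Theorem~\ref{tb}.
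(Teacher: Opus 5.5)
Your proof is correct and is essentially the paper's argument: the same independence and Tonelli induction gives $\mathbb E(D_n^q)\le r^n d(x,y)^q$, and you use the same range $\lambda\in(r^{1/q},1)$. The only difference is the almost-sure step, where the paper avoids Markov and Borel--Cantelli by setting $c_{x,y}^q=\sum_{n\ge 0}\lambda^{-nq}d(x,y)^{-q}D_n^q$, which is integrable by Tonelli; your supremum is pointwise dominated by this series, so you could drop Borel--Cantelli too and get $\mathbb E(c_{x,y}^q)<\infty$ for free.
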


\begin{proof}
Let  $(f,\mathbb{P}_{\nu})$  be an i.i.d. RIM
that is $r$-contracting on $L^{q}$-average between points, that is, 
\[
\int d(f_\alpha(x), f_\alpha(y))^q d\nu(\alpha) \leq r d(x,y)^q \quad \mbox{for all } x,y \in M.
\]

Fix $x,y\in M$ with $x\neq y$. Define \(D_n(\omega) = d(f_\omega^n(x),f_\omega^n(y) )^q\), we claim that 
\[
\mathbb E(D_{n}) \leq r^n \, d(x,y)^{q} \quad \mbox{for all }x,y \in M
\]
for every $n$. 

Indeed, assume this statement is true for some $n\geq 1$. We have 
\[
\begin{split}
\mathbb{E}(D_{n+1})&=\int d(f_{\omega_{n}}(f_{\omega}^{n}(x)),f_{\omega_{n}}(f_{\omega}^{n}(y)))^{q}\, d\mathbb{P}(\omega)
\\
&=\int \int d(f_{\omega_{n}}(f_{\omega}^{n}(x)),f_{\omega_{n}}(f_{\omega}^{n}(y)))^{q}\,d\nu(\omega_{n})\,d\nu^{n}(\omega_{0},\dots,\omega_{n-1})\\
&\le \int rd(f^{n}_{\omega}(x),f^{n}_{\omega}(y))^{q}
\, \nu^{n}(\omega_{0},\dots, \omega_{n-1})\\
&=r \int d(f^{n}_{\omega}(x),f^{n}_{\omega}(y))^{q}\, d\mathbb{P}(\omega)=r \mathbb{E}(D_{n})\leq r^{n+1}d(x,y)^{q}.
\end{split}
\]
This proves the claim. 

Now, take a real number $a$ with $r<a<1$. By Tonelli's Theorem
\[
\int \sum_{n =0}^\infty a^{-n}\, d(x,y)^{-q} \, D_n(\omega) \, d\mathbb P(\omega) 
= \sum_{n =0}^\infty a^{-n}\, d(x,y)^{-q} \, \mathbb E(D_n) \leq  \sum_{n=0}^{\infty}  a^{-n}r^n < \infty.
\]
This implies that 
\[
c_{x,y}(\omega):=\left(\sum_{n =0}^\infty a^{-n}\, d(x,y)^{-q} \, D_n(\omega)\right)^\frac{1}{q}
 < \infty \quad \mbox{a.e. }
\]
Consequently,  for \(\mathbb P-\)almost every \(\omega\), and all $n\geq 1$, we have: 
$
d(f_\omega^n(x),f_\omega^n(y) )\leq c_{x,y}(\omega)a^{\frac{n}{q}}d(x,y).
$

\end{proof}
\bibliographystyle{plain}
\bibliography{referencias}

\end{document}